\newtheorem{theorem}{Theorem}
\newtheorem{prop}{Proposition}
\journal{arxiv}
\begin{document}

\begin{frontmatter}


\title{A Numerical Method for the Parametrization of Stable and Unstable Manifolds of Microscopic Simulators}



\author{Constantinos Siettos\corref{cor1}}
\address{Universit\'a degli Studi di Napoli Federico II, Dipartimento di Matematica e Applicazioni ``Renato Caccioppoli", Naples, Italy}

\author{Lucia Russo}
\address{Consiglio Nazionale delle Ricerche, Naples, Italy}

\cortext[cor1]{Corresponding author: Constantinos Siettos, email:constantinos.siettos@unina.it}

\begin{abstract}
We address a numerical methodology for the computation of coarse-grained stable and unstable manifolds of saddle equilibria/stationary states of multiscale/stochastic systems for which a ``good" macroscopic description in the form of Ordinary (ODEs) and/or Partial differential equations (PDEs) does not explicitly/ analytically exists in a closed form. Thus, the assumption is that we have a detailed microscopic simulator of a complex system in the form of Monte-Carlo, Brownian dynamics, Agent-based models e.t.c. (or a black-box large-scale discrete time simulator) but due to the inherent complexity of the problem, we don't have explicitly an accurate model in the form of ODEs or PDEs.
Our numerical scheme is a three-tier one including: (a) the ``on demand" detection of the coarse-grained saddle equilibrium, (b) its coarse-grained stability analysis, and (c) the parametrization of the semi-local invariant stable and unstable manifolds by the numerical solution of the homological/functional equations for the coefficients of the truncated series approximation of the manifolds.
\end{abstract}

\begin{keyword}
 Microscopic Simulators \sep Saddle points \sep Stable and Unstable Manifolds \sep  Numerical Analysis \sep Equation-free


\end{keyword}

\end{frontmatter}


\section{Introduction}\label{sec1}

The computation of invariant manifolds of dynamical systems is very important for a series of system-level tasks, particularly for the bifurcation analysis and control. For example, the detection of stable manifolds of saddle points allows the identification of the boundary between different basins of attraction, while the intersection of stable and unstable manifolds most-often leads to complex dynamical behaviour such as chaotic dynamics \cite{Yagasaki1994,Feudel2005}. Their computation is also central to the control of nonlinear systems and especially in the control of chaos \cite{Ott1990,Schiff1994,Yagasaki1994,Flakamp2017}.
However, their computation is not trivial: even for relatively simple low-dimensional ODEs, their analytical derivation is most of the times an overwhelming difficult task. Thus, one has to resort to their numerical approximation. However, this task is not easy; at the beginning of '90s only one-dimensional global invariant manifolds of vector fields could be computed. Guckenheimer \& Worfolk \cite{Guckenheimer1993} proposed an algorithm for converging on the stable manifold of saddles based on geodesics emanating from the saddle by iteratively rescaling the radial part of the vector field on the submanifold. Johnson et al. (1997) \cite{Johnson1997} introduced a numerical scheme to reconstruct two-dimensional stable and unstable manifolds of saddles. The proposed method starts with the creation of a ring of points on the local-linear eigenspace and successively creates circles of points that are then connected by a triangular mesh. The appropriate points are selected through time integration so that the velocity of the vector field is similar in aN arc-length sense for all trajectories. Krauskopf \& Osinga (1999) \cite{Krauskopf1999} developed a numerical method based on geodesics; the manifold is evolved iteratively by hyperplanes perpendicular to a previous detected geodesic circle. Krauskopf et al. (2005) \cite{KRAUSKOPF2005} addressed a numerical method for the approximation of two-dimensional stable and unstable manifolds which incorporates the solution of a boundary value problem; the method performs a continuation of a family of trajectories possessing the same  arc-length. For a survey of methods for the numerical computation of stable and unstable manifolds see also Krauskopf et al. (2005) \cite{KRAUSKOPF2005}. In the above methods, the stable manifold is computed as the unstable manifold of the inverse map, i.e. by following the flow of the vector field backward in time \cite{England2004}. Thus, an explicit knowledge of the vector field and its inverse is required which  however is not always available. England et al. (2004) \cite{England2004} presented an algorithm for computing one-dimensional stable manifolds for planar maps when an explicit expression for the inverse map is not available and/or even the map is not invertible. Triandaf et al. (2003) \cite{Triandaf2003}  proposed a procedure for approximating stable and unstable manifolds given only experimental data based on time-delay embeddings of a properly selected data set of initial conditions.\\
Another approach to compute invariant manifolds, the so-called parametrization method has been introduced by Cabre et al. \cite{Cab1,Cab2,Cabr3}. This is a numerical-assisted approach based on functional analysis tools for deriving analytical expressions of the semi-local invariant manifolds. This involves the expansion of the invariant manifold as series and the construction of a system of homological equations for the coefficients of the series. Based on this approach, Haro et al. (2016) \cite{Haro2016} addressed a numerical approach for the computation of the coefficients of high order power series expansions of parametrizations of two-dimensional invariant manifolds. Breden et al. (2016) \cite{Breden2016} employed the parametrization method to compute stable and unstable manifolds of vectors fields. For the implementation of the method it is assumed that the vector field is explicitly available in a closed form.\\
However, for many complex systems of contemporary interest, the equations that can describe adequately the dynamics at the macroscopic-continuum scale are not explicitly available in the form of ODEs or PDEs in a closed form. Take for example the case where the laws that govern the dynamics of the interactions between the units that constitute the system may be known in the form of e.g. molecular dynamics, Brownian dynamics, agent-based modeling, Monte Carlo etc., but a ``good" macroscopic description is not available in a closed form. For this kind of problems the lack of a macroscopic description in a closed form constitutes a stumbling block in our ability to systematically analyse, design and control the emergent dynamics. Two ways are traditionally used to study the emergent behaviour of such microscopic dynamical models. On the one hand, there is the simple temporal simulation. An ensemble of many initial conditions would be set up; a large enough number of ensemble realizations would be created for each initial condition; some of the parameters of the model would probably have to be modified and finally the statistics of the detailed dynamics of the system would be monitored for a long time to investigate the coarse-grained behaviour. However, this ``simple" temporal simulation is most of the times inappropriate  for the systematic bifurcation analysis, optimization and control of the emergent behaviour. On the other hand there is the statistical-mechanics/assisted approach where one tries to analytically find closures, i.e. the relations for the moments of the detailed microscopic distribution that would allow the derivation of evolution equations at the macrosocpic/emergent level. For example, for Monte Carlo simulations (these processes are typically Markovian) a Master Equation can be derived from which evolution equations are obtained for a few moments of the underlying probability distribution.  However, these equations usually involve higher-order moments whose evolution dynamics are functions of  higher order moments. This lead to an infinite hierarchy of evolution equations. Thus at some level these higher order moments have to be expressed as functions of the lower-order ones in order to close the system of equations. However, the assumptions that underlie these ``closures" introduce certain qualtitative and quantitative biases in the analysis of the ``actual" system as represented by the best available microsocpic simulator (see for example in \cite{Reppas2012} a comparative analysis between various closures for a microscopic model and a discussion about the biases that are introduced).\\
The Equation-free approach \cite{Makeev2002,Kevrekidis2003,Siettos2003,Kevrekidis2004}, a multiscale numerical-assisted framework, allows the establishment of the link between traditional continuum numerical analysis and microscopic/ stochastic simulation of complex/multiscale systems. The Equation-Free approach allows the systematic numerical analysis of the coarse-grained macrosocpic dynamics bypassing the derivation of ``closures" in an explicit analytically form. The method identifies ``on-demand" the quantities required for performing numerical analysis at the continuum level, such as coarse-grained Jacobians and Hessians; these quantities are obtained by appropriately initialized runs of the microsocpic simulators, which are treated as black boxes maps. Regarding the computation of coare-grained invariant manifolds, Gear and Kevrekidis \cite{GearKevrekidis} introduced a method for the convergence on the coarse-grained slow manifolds of legacy simulators by requiring that the change in the ``fast" variables (i.e. the variables that are quickly ``slaved" to the variables that parametrize the slow manifold) is zero. In another paper, Gear et al. \cite{GearKaper} computed coarse-grained
slow manifolds by restricting the derivatives of the ``fast" variables to zero. Zagaris et al. (2009) \cite{Zagaris2009} performed a systematic analysis of the accuracy and convergence of Equation-free projection to the slow manifold.\\
Here, we present a new numerical method for the computation of coarse-grained stable and unstable manifolds of saddle equilibria/stationary states of microscopic dynamical simulators (and in general discrete-time black-box maps). Our method is based on the Equation-free framework. The approximation of the semi-local coarse-grained stable and unstable manifolds is achieved  by a truncated polynomial expansion; the coefficients of the series are computed by the Newton-Raphson method applied on a coarse-grained map of the microscopic simulator. Thus, the proposed numerical method involves a three-step procedure including the Equation-free: (a) detection of the coarse-grained saddle (b) computation of the coarse-grained Jacobian on the saddle and the computation of the corresponding eigenmodes,  (c) identification of the polynomial coefficients of the semi-local coarse-grained stable and unstable manifolds; this step involves (i) the numerical construction of a back-box coarse-grained map for the coefficients of the polynomial series, (ii) iterative estimation of the polynomial coefficients by applying Newton's method around the constructed coarse-grained map.
The method is illustrated through two examples whose stable and unstable manifolds are also approximated analytically through the parametrization method for accessing the efficiency of our proposed numerical method. The first example is a simple toy discrete-time map and the second one is a Gillespie-Monte Carlo realization of a simple catalytic reaction scheme describing the dynamics of CO oxidation on catalytic surfaces.

\section{Computation of Stable \& Unstable Manifolds of Saddles for Discrete-time Models}
\label{sec:Discrete}

We will first present the way for approximating the stable and unstable manifolds of a saddle point for discrete-time systems when the equations are given in an explicit form. Then, we will show how one can approximate the stable and unstable manifolds when equations are not given in an explicit form. The later case includes large-scale black-box simulators as well as microscopic/stochastic multiscale models.

Let us consider the discrete-time model given by:

\begin{equation}
\boldsymbol{x}_{k+1} =\boldsymbol{F}(\boldsymbol{x}_k, \boldsymbol{p})  
\label{eqn1}
\end{equation}

where $\boldsymbol{F}:R^n \times R^m \rightarrow R^n$ is a smooth multivariable, vector-valued function having $\boldsymbol{x}_k \in R^n$ as initial condition.

Regarding the computation of the stable and unstable manifolds of the saddle point of the above discrete-time system, We will prove the following theorem:

\begin{theorem} Let us denote by $(\boldsymbol{x}^*, \boldsymbol{p}^*$) the saddle fixed point of the discrete-time model (\ref{eqn1}) which satisfies $\boldsymbol{x}^*=\boldsymbol{F}(\boldsymbol{x}^*, \boldsymbol{p}^*)$. Let us also assume that the Jacobian $\nabla \boldsymbol{F}(\boldsymbol{x}^*, \boldsymbol{p}^*)$ is diagonalizable. Let $\boldsymbol{V}_1$ be the $n \times l$ matrix whose columns are the eigevectors of $\nabla \boldsymbol{F}(\boldsymbol{x}^*, \boldsymbol{p}^*)$ that correspond to the $l$ eigenvalues lying inside the unit circle, and $\boldsymbol{V}_2$ be the $n \times n-l$ matrix whose columns are the eigevectors of $\nabla \boldsymbol{F}(\boldsymbol{x}^*, \boldsymbol{p}^*)$ that correspond to the $n-l$ eigenvalues lying outside the unit circle. Let us also define $\boldsymbol{z}_s \in R^l$ and $\boldsymbol{z}_u \in R^{n-l}$ by the transformation $\boldsymbol{x}'=\begin{bmatrix} \boldsymbol{V}_1 & \boldsymbol{V}_2  \end{bmatrix} \begin{bmatrix} \boldsymbol{z}_s \\ \boldsymbol{z}_u  \end{bmatrix}$, where $\boldsymbol{x}'=\boldsymbol{x}-\boldsymbol{x}^*$. Then the fixed point $\boldsymbol{x}'=\boldsymbol{0}$ has: 

\item (A1) a $C^r$ $l$-dimensional local stable manifold $\boldsymbol{W}_s(\boldsymbol{0})$ tangent to the subspace spanned by the columns of $\boldsymbol{V}_1$ at the origin defined by:

\begin{equation}
\boldsymbol{W}_s(\boldsymbol{0})=\{(\boldsymbol{z}_s,\boldsymbol{z}_u)\in R^l \times R^{n-l}| \boldsymbol{z}_{u}=\boldsymbol{h}_s(\boldsymbol{z}_{s})\},
\label{eqn2}
\end{equation}

where $\boldsymbol{h}_s:R^{l}\rightarrow R^{n-l}$ is a $C^r$ function which satisfies $\boldsymbol{h}_s(\boldsymbol{z}_{s})=\boldsymbol{0}$ and $\nabla_{\boldsymbol{z}_s}h_j\equiv(\frac{\partial h_{sj}}{\partial z_{s1}},\frac{\partial h_{sj}}{\partial z_{s2}},\dots \frac{\partial h_{sj}}{\partial z_{sl}})=\boldsymbol{0}$, $\forall h_{sj}, j=1,2,\dots n-l$; $h_{sj}(\boldsymbol{z}_s$)  is the $j$-th component of  $\boldsymbol{h_s(\boldsymbol{z}_s)}$.\\

\item (A2) a $C^r$ $n-l$-dimensional local stable manifold $\boldsymbol{W}_u(\boldsymbol{0})$ tangent to the subspace spanned by the columns of $\boldsymbol{V}_2$ at the origin defined by:

\begin{equation}
\boldsymbol{W}_u(\boldsymbol{0})=\{(\boldsymbol{z}_s,\boldsymbol{z}_u)\in R^l \times R^{n-l}| \boldsymbol{z}_{s}=\boldsymbol{h}_u(\boldsymbol{z}_{u})\},
\label{eqn3}
\end{equation}

where $\boldsymbol{h}_u:R^{n-l}\rightarrow R^{l}$ is a $C^r$ function which satisfies $\boldsymbol{h}_u(\boldsymbol{z}_{u})=\boldsymbol{0}$ and $\nabla_{\boldsymbol{z}_u}h_{uj}\equiv(\frac{\partial h_{uj}}{\partial z_{u1}},\frac{\partial h_{uj}}{\partial z_{u2}},\dots \frac{\partial h_{uj}}{\partial z_{u1}})=\boldsymbol{0}$, $\forall h_{uj}, j=1,2,\dots l$; $h_{uj}(\boldsymbol{z}_u$)  is the $j$-th component of  $\boldsymbol{h_u(\boldsymbol{z}_u)}$.\\

\item (B1) On the stable manifold the following system of functional equations hold:

\begin{equation}
\begin{aligned}
\boldsymbol{h}_s(\boldsymbol{\Lambda}_s\boldsymbol{z}_{s}+\boldsymbol{g}_s(\begin{bmatrix} \boldsymbol{V}_1 & \boldsymbol{V}_2  \end{bmatrix}\begin{bmatrix} \boldsymbol{z_s} \boldsymbol{h}_s(\boldsymbol{z}_{s})\end{bmatrix},\boldsymbol{x}^*,\boldsymbol{p}^*))=\\
\boldsymbol{\Lambda}_u\boldsymbol{h}_s(\boldsymbol{z}_{s})+\boldsymbol{g}_u(\begin{bmatrix} \boldsymbol{V}_1 & \boldsymbol{V}_2  \end{bmatrix}\begin{bmatrix} \boldsymbol{z_s}\\\boldsymbol{h}_s(\boldsymbol{z}_{s})\end{bmatrix},\boldsymbol{x}^*,\boldsymbol{p}^*)) 
\label{eqn4}
\end{aligned}
\end{equation}
\\
\item (B2) On the unstable manifold the following system of functional equations hold:

\begin{equation}
\begin{aligned}
\boldsymbol{h}_u(\boldsymbol{\Lambda}_u\boldsymbol{z}_{u}+\boldsymbol{g}_u(\begin{bmatrix} \boldsymbol{V}_1 & \boldsymbol{V}_2  \end{bmatrix}\begin{bmatrix} \boldsymbol{h}_u(\boldsymbol{z}_{u}) \boldsymbol{z}_{u}\end{bmatrix},\boldsymbol{x}^*,\boldsymbol{p}^*))=\\
\boldsymbol{\Lambda}_s\boldsymbol{h}_u(\boldsymbol{z}_{u})+\boldsymbol{g}_s(\begin{bmatrix} \boldsymbol{V}_1 & \boldsymbol{V}_2  \end{bmatrix}\begin{bmatrix} \boldsymbol{h}_u(\boldsymbol{z}_{u})\\\boldsymbol{z}_u\end{bmatrix},\boldsymbol{x}^*,\boldsymbol{p}^*)) 
\label{eqn5}
\end{aligned}
\end{equation}
\\

In the above, $\boldsymbol{\Lambda}_s$ is the $l\times l$ (block) diagonal matrix containing the $l$ eigenvalues with $|\lambda_{i}|<1$ and $\boldsymbol{\Lambda}_u$ is the $(n-l)\times (n-l)$ (block) diagonal matrix containing the $(n-l)$ eigenvalues with $|\lambda_{i}|>1$; $\boldsymbol{g}_s$ and $\boldsymbol{g}_u$ are $l$ and $n-l$ vector-valued functions, respectively, obtained by 

\begin{equation}
\begin{bmatrix} \boldsymbol{g}_s \\ \boldsymbol{g}_u  \end{bmatrix}=\begin{bmatrix} \boldsymbol{V}_1 & \boldsymbol{V}_2  \end{bmatrix}^{-1}\boldsymbol{g}(\boldsymbol{x}',\boldsymbol{x}^*,\boldsymbol{p}^*)
\label{eqn6}
\end{equation}

where $\boldsymbol{g}(\boldsymbol{x}',\boldsymbol{x}^*,\boldsymbol{p}^*)$ corresponds to the $n$-vector-valued nonlinear function: 

\begin{align}
    \boldsymbol{g}(\boldsymbol{x}',\boldsymbol{x}^*,\boldsymbol{p}^*) &= \begin{bmatrix}
          g_{1}(\boldsymbol{x}',\boldsymbol{x}^*,\boldsymbol{p}^*)\\
          g_{2}(\boldsymbol{x}',\boldsymbol{x}^*,\boldsymbol{p}^*) \\
           \vdots \\
          g_{n}(\boldsymbol{x}',\boldsymbol{x}^*,\boldsymbol{p}^*)
         \end{bmatrix}
         \label{eqn7}
  \end{align}

containing all, but the linearization around the saddle, non-linear terms of $\boldsymbol{F}(\boldsymbol{x}_k, \boldsymbol{p})$ satisfying: $ \left\lVert \boldsymbol{g}(\boldsymbol{x}',\boldsymbol{x}^*, \boldsymbol{p}^*)\right\rVert \leq c(\boldsymbol{x}^*) \left\lVert \boldsymbol{x}' \right\rVert^2$.

\end{theorem}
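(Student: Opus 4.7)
The plan is a three-stage reduction: (i) shift the fixed point to the origin and Taylor-expand to isolate the linear and nonlinear parts, (ii) block-diagonalize using the eigenbasis $\begin{bmatrix}\boldsymbol{V}_1 & \boldsymbol{V}_2\end{bmatrix}$ and appeal to the classical discrete-time stable/unstable manifold theorem for hyperbolic fixed points, (iii) derive the functional equations (B1)--(B2) algebraically from invariance of the resulting graphs.

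For (i), put $\boldsymbol{x}'=\boldsymbol{x}-\boldsymbol{x}^*$ and Taylor-expand to get
\[
\boldsymbol{x}'_{k+1}=\boldsymbol{A}\boldsymbol{x}'_k+\boldsymbol{g}(\boldsymbol{x}'_k,\boldsymbol{x}^*,\boldsymbol{p}^*),\qquad \boldsymbol{A}:=\nabla\boldsymbol{F}(\boldsymbol{x}^*,\boldsymbol{p}^*),
\]
where $\boldsymbol{g}$ collects all second- and higher-order terms; the bound $\|\boldsymbol{g}\|\le c(\boldsymbol{x}^*)\|\boldsymbol{x}'\|^2$ on a bounded neighborhood of the origin is the standard Taylor remainder estimate, so (\ref{eqn7}) and its accompanying inequality are immediate. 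The change of coordinates $\boldsymbol{x}'=\begin{bmatrix}\boldsymbol{V}_1 & \boldsymbol{V}_2\end{bmatrix}\begin{bmatrix}\boldsymbol{z}_s\\\boldsymbol{z}_u\end{bmatrix}$ conjugates $\boldsymbol{A}$ to the block-diagonal form $\mathrm{diag}(\boldsymbol{\Lambda}_s,\boldsymbol{\Lambda}_u)$ and, together with (\ref{eqn6}), recasts the dynamics as
\[
\boldsymbol{z}_{s,k+1}=\boldsymbol{\Lambda}_s\boldsymbol{z}_{s,k}+\boldsymbol{g}_s,\qquad \boldsymbol{z}_{u,k+1}=\boldsymbol{\Lambda}_u\boldsymbol{z}_{u,k}+\boldsymbol{g}_u,
\]
with $\sigma(\boldsymbol{\Lambda}_s)\subset\{|\lambda|<1\}$ and $\sigma(\boldsymbol{\Lambda}_u)\subset\{|\lambda|>1\}$.

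For (ii), I invoke the classical stable/unstable manifold theorem for a hyperbolic fixed point of a $C^r$ diffeomorphism (e.g., via the Hirsch--Pugh--Shub graph transform, or Perron's integral-equation method). Its hypotheses hold: no eigenvalue of $\boldsymbol{A}$ lies on the unit circle, $\boldsymbol{F}$ is $C^r$, and $\boldsymbol{g}=O(\|\boldsymbol{x}'\|^2)$. The theorem produces, on a neighborhood of $\boldsymbol{0}$, the $C^r$ graphs $\boldsymbol{h}_s,\boldsymbol{h}_u$ of the asserted dimensions; because $\boldsymbol{g}_s,\boldsymbol{g}_u$ vanish to second order, the first-order information of each graph coincides with the corresponding invariant subspace of the linear part, forcing $\boldsymbol{h}_s(\boldsymbol{0})=\boldsymbol{0}$, $\nabla\boldsymbol{h}_s(\boldsymbol{0})=\boldsymbol{0}$ and the analogous identities for $\boldsymbol{h}_u$. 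This delivers (A1)--(A2). The unstable manifold is obtained by applying the same construction to $\boldsymbol{F}^{-1}$, which is well defined in a neighborhood of $\boldsymbol{x}^*$ because $\boldsymbol{A}$ is invertible.

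For (iii), equations (B1)--(B2) are a direct transcription of invariance. Taking a point $(\boldsymbol{z}_s,\boldsymbol{h}_s(\boldsymbol{z}_s))\in\boldsymbol{W}_s(\boldsymbol{0})$, one iterate of the block system sends it to the pair $\bigl(\boldsymbol{\Lambda}_s\boldsymbol{z}_s+\boldsymbol{g}_s,\ \boldsymbol{\Lambda}_u\boldsymbol{h}_s(\boldsymbol{z}_s)+\boldsymbol{g}_u\bigr)$, with $\boldsymbol{g}_s,\boldsymbol{g}_u$ evaluated at $\begin{bmatrix}\boldsymbol{V}_1 & \boldsymbol{V}_2\end{bmatrix}\begin{bmatrix}\boldsymbol{z}_s\\\boldsymbol{h}_s(\boldsymbol{z}_s)\end{bmatrix}$. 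Invariance forces the new $\boldsymbol{z}_u$-coordinate to equal $\boldsymbol{h}_s$ applied to the new $\boldsymbol{z}_s$-coordinate, which is precisely (\ref{eqn4}); equation (\ref{eqn5}) follows identically after swapping $s\leftrightarrow u$. The main obstacle is really stage (ii): the existence and smoothness of $\boldsymbol{h}_s,\boldsymbol{h}_u$ requires setting up a contractive graph-transform operator on a ball in the Banach space of Lipschitz maps with small Lipschitz constant, with contraction secured by the spectral separation from the unit circle and the quadratic smallness of $\boldsymbol{g}$; once that machinery is in place, the tangency conditions and the functional equations are purely algebraic consequences.
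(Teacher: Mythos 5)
Your proposal is correct and follows essentially the same route as the paper: shift the saddle to the origin, split $\boldsymbol{F}$ into its linearization plus a quadratically small remainder $\boldsymbol{g}$, block-diagonalize via the eigenbasis $\begin{bmatrix}\boldsymbol{V}_1 & \boldsymbol{V}_2\end{bmatrix}$, cite the classical stable/unstable manifold theorem for (A1)--(A2), and read off (B1)--(B2) as the invariance condition $\boldsymbol{z}_{u,k+1}=\boldsymbol{h}_s(\boldsymbol{z}_{s,k+1})$ for the graph. The only cosmetic difference is that you name concrete mechanisms (graph transform, Perron, passage to $\boldsymbol{F}^{-1}$ for the unstable case) where the paper simply cites standard references, but the logical structure is the same.
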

\begin{proof}
\item (A1), (A2)  

By taking $\boldsymbol{x}'=\boldsymbol{x}-\boldsymbol{x}^*$, the model given by Eq.~(\ref{eqn1}) reads:

\begin{equation}
\boldsymbol{x}'_{k+1} =-\boldsymbol{x}^*+\boldsymbol{F}(\boldsymbol{x}'_k+\boldsymbol{x}^*, \boldsymbol{p}^*)  
\label{eqn8}
\end{equation}

By assuming that the vector field:

\begin{align}
    \boldsymbol{F}(\boldsymbol{x},\boldsymbol{p}) &= \begin{bmatrix}
           F_{1}(\boldsymbol{x},\boldsymbol{p})\\
           F_{2}(\boldsymbol{x},\boldsymbol{p}) \\
           \vdots \\
          F_{n}(\boldsymbol{x},\boldsymbol{p})
         \end{bmatrix}
         \label{eqn9}
  \end{align}
  
is differentiable on an open ball $B$ around $\boldsymbol{x}'$ and $\boldsymbol{x}^*$  the right-hand-side of Eq.~(\ref{eqn8}) around  $\boldsymbol{x}^*$ can be written as:

\begin{equation}
\boldsymbol{x}'_{k+1} =\nabla \boldsymbol{F}(\boldsymbol{x}^*, \boldsymbol{p}^*)\boldsymbol{x}' + \boldsymbol{g}(\boldsymbol{x}',\boldsymbol{x}^*, \boldsymbol{p}^*) 
\label{eqn10},
\end{equation}

where $\nabla \boldsymbol{F}(\boldsymbol{x}^*, \boldsymbol{p}^*)$ is the Jacobian evaluated at $(\boldsymbol{x}^*, \boldsymbol{p}^*)$ and $\boldsymbol{g}(\boldsymbol{x}',\boldsymbol{x}^*, \boldsymbol{p}^*)$ contains all the higher order non-linear terms of $\boldsymbol{F}(\boldsymbol{x}_k, \boldsymbol{p})$. For example if $\boldsymbol{F}(\boldsymbol{x}_k, \boldsymbol{p})$ is expanded in a Taylor expansion around the saddle point then:

\begin{equation}
\begin{aligned}
\boldsymbol{g}(\boldsymbol{x}',\boldsymbol{x}^*, \boldsymbol{p}^*) =\frac{1}{2}\sum_{i,j=1}^{n} \boldsymbol{F}_{x_ix_j}(\boldsymbol{x}^*, \boldsymbol{p}^*)(x_i-x_i^*)(x_j-x_j^*)+\dots \\
+\frac{1}{k!}\sum_{i_1,i_2, \dots i_k=1}^{n} \boldsymbol{F}_{x_{i_1}x_{i_2}\dots x_{i_k}}(\boldsymbol{x}^*, \boldsymbol{p}^*)(x_{i_1}-x_{i_1}^*) \cdot (x_{i_k}-x_{i_k}^*)+O(\left\lVert \boldsymbol{x}'\right\lVert^{k+1})
 \label{eqn11},
 \end{aligned}
\end{equation}

where

\begin{equation}
\begin{aligned}
 \boldsymbol{F}_{x_{i_1}x_{i_2}\dots x_{i_k}}\equiv \frac{\partial^k\boldsymbol{F}}{\partial x_{i_1}\partial x_{i_2}\dots \partial x_{i_k}},
 \end{aligned}
  \label{eqn12}
\end{equation}

Since the Jacobian computed at the saddle is diagonalizable, it exists an invertible matrix $\boldsymbol{V}$ such that

\begin{equation}
\nabla \boldsymbol{F}(\boldsymbol{x}^*, \boldsymbol{p}^*) =\boldsymbol{V}\boldsymbol{\Lambda}\boldsymbol{V}^{-1}
 \label{eqn13}
\end{equation}

or,

 \begin{equation}
\boldsymbol{\Lambda}=\boldsymbol{V}^{-1}\nabla \boldsymbol{F}(\boldsymbol{x}^*,\boldsymbol{p}^*)\boldsymbol{V}
 \label{eqn14}
\end{equation}

If all eigenvalues are real then $\boldsymbol{\Lambda}$ is a diagonal matrix:

 \begin{equation}
 \boldsymbol{\Lambda} =\begin{bmatrix}
    \lambda_{1} & 0 & 0 & \dots  & 0 \\
    0 & \lambda_{2} & 0 & \dots  & 0 \\
    \vdots & \vdots & \vdots & \ddots & \vdots \\
    0 & 0 & 0& \dots  & \lambda_{n}
\end{bmatrix}.
 \label{eqn15}
\end{equation}

and $\boldsymbol{V}$ is the matrix whose columns are the eigenvectors $\boldsymbol{v}_j$ of the system's Jacobian  $\nabla \boldsymbol{F}(\boldsymbol{x}^*,\boldsymbol{p}^*)$. If the Jacobian has a complex pair of eigenvalues $\lambda_{k,k+1}=a\pm\beta i$, $\boldsymbol{\Lambda}$ is a block diagonal matrix of the form

 \begin{equation}
 \boldsymbol{\Lambda} =\begin{bmatrix}
    \lambda_{1} & 0 & 0 & \dots  & 0 &0\\
    0 & \lambda_{2} & 0 & \dots  & 0 &0\\
    \vdots & \vdots & \ddots & \vdots & \vdots&\vdots \\
    \boldsymbol{0} & \boldsymbol{0} & \boldsymbol{0}& \boldsymbol{B}_k  & \boldsymbol{0}& \boldsymbol{0}\\
    \vdots & \vdots & \vdots & \dots & \ddots&\vdots \\
    0 & 0 & 0 & \dots  & 0 &\lambda_{n}
\end{bmatrix}.
 \label{eqn16}
\end{equation}

where 

\begin{equation}
 \boldsymbol{B}_k =\begin{bmatrix}
    a&\beta\\
    -\beta& a
\end{bmatrix}.
 \label{eqn17}
\end{equation}

In that case, for the  pair of complex eigenvectors the two corresponding columns of $\boldsymbol{V}$ are assembled by the real and imaginary parts of the complex eigenvector $\boldsymbol{v}_k$.\\
On a saddle, $l$ of these eigenvectors correspond to $l$ eigenvalues with $ |\lambda_{i}|<1$ and $n-l$ of these eigenvectors correspond to $n-l$ eigenvalues with $ |\lambda_i|>1$. 
Let us rearrange the columns of $\boldsymbol{V}$ so that the matrix of eigenvalues $\Lambda$ can be written in a block form as:

\begin{equation}
\boldsymbol{\Lambda}=\left(\begin{array}{cc} \boldsymbol{\Lambda}_s & \boldsymbol{0}\\ \boldsymbol{0} & \boldsymbol{\Lambda}_u \end{array} \right), 
\label{eqn20}
\end{equation}

where $\boldsymbol{\Lambda}_s$ is the $l\times l$ (block) diagonal matrix containing the $l$ eigenvalues with $|\lambda_{i}|<1$ and $\boldsymbol{\Lambda}_u$ is the $(n-l)\times (n-l)$ (block) diagonal matrix containing the $(n-l)$ eigenvalues with $|\lambda_{i}|>1$.


Use the transformation
\begin{equation}
\boldsymbol{x}'=\boldsymbol{V}\boldsymbol{z}, 
\label{eqn21}
\end{equation}

and introduce Eq.(~\ref{eqn21}) in Eq.~(\ref{eqn10}) to get:

\begin{equation}
\boldsymbol{V} \boldsymbol{z}_{k+1}=\nabla \boldsymbol{F}
(\boldsymbol{x}^*,\boldsymbol{p}^*)\boldsymbol{V}\boldsymbol{z}_{k}+\boldsymbol{g}(\boldsymbol{V}\boldsymbol{z}_k,\boldsymbol{x}^*,\boldsymbol{p}^*)
\label{eqn22}
\end{equation}

or

\begin{equation}
 \boldsymbol{z}_{k+1}=\boldsymbol{V}^{-1}\nabla\boldsymbol{F}
(\boldsymbol{x}^*,\boldsymbol{p}^*)\boldsymbol{V}\boldsymbol{z}_{k}+\boldsymbol{V}^{-1} \boldsymbol{g}(\boldsymbol{V}\boldsymbol{z}_k,\boldsymbol{x}^*,\boldsymbol{p}^*)
\label{eqn23}
\end{equation}

Hence, Eq.~(\ref{eqn23}) can be written as:

\begin{equation}
\begin{bmatrix}\boldsymbol{z_s} \\ \boldsymbol{z_u}\end{bmatrix} _{k+1}=
\left(\begin{array}{cc} \boldsymbol{\Lambda}_s & \boldsymbol{0}\\ \boldsymbol{0} & \boldsymbol{\Lambda}_u \end{array} \right) 
\begin{bmatrix} \boldsymbol{z_s}\\ \boldsymbol{z_u}\end{bmatrix} _k+\begin{bmatrix} \boldsymbol{V}_1 & \boldsymbol{V}_2  \end{bmatrix}^{-1}\boldsymbol{g}(\begin{bmatrix} \boldsymbol{V}_1 & \boldsymbol{V}_2  \end{bmatrix}\begin{bmatrix} \boldsymbol{z_s}\\ \boldsymbol{z_u}\end{bmatrix}_k,\boldsymbol{x}^*,\boldsymbol{p}^*),
\label{eqn24}
\end{equation}

where $\boldsymbol{V}_1$ and $\boldsymbol{V}_2$ are the sub-matrices of dimensions $n \times l$ and $n \times n-l$, whose columns contain the eigenvectors corresponding to the eigenvalues inside and outside the unit disc, respectively. Note that $\boldsymbol{z}_s$ and $\boldsymbol{z}_u$ are  uncoupled with respect to the linear terms.\par 
Thus, Eq.~(\ref{eqn24}) can then be re-written as:

\begin{equation}
\boldsymbol{z}_{s,k+1}=\boldsymbol{\Lambda}_s\boldsymbol{z}_{s,k}+\boldsymbol{g}_s(\begin{bmatrix} \boldsymbol{V}_1 & \boldsymbol{V}_2  \end{bmatrix}\begin{bmatrix} \boldsymbol{z_s}\\ \boldsymbol{z_u}\end{bmatrix},\boldsymbol{x}^*,\boldsymbol{p}^*)
\label{eqn25}
\end{equation}
\&
\begin{equation}
\boldsymbol{z}_{u,k+1}=\boldsymbol{\Lambda}_u\boldsymbol{z}_{u,k}+\boldsymbol{g}_u(\begin{bmatrix} \boldsymbol{V}_1 & \boldsymbol{V}_2  \end{bmatrix} \begin{bmatrix} \boldsymbol{z_s}\\ \boldsymbol{z_u}\end{bmatrix},\boldsymbol{x}^*,\boldsymbol{p}^*),
\label{eqn26}
\end{equation}

where
\begin{equation}
\begin{bmatrix} \boldsymbol{g}_s \\ \boldsymbol{g}_u  \end{bmatrix}=\begin{bmatrix} \boldsymbol{V}_1 & \boldsymbol{V}_2  \end{bmatrix}^{-1}\boldsymbol{g}(\boldsymbol{x}',\boldsymbol{x}^*,\boldsymbol{p}^*).
\label{eqn27}
\end{equation}

The fixed point of Eqs.(~\ref{eqn25},\ref{eqn26}) is the ($\boldsymbol{z}_s=0$,$\boldsymbol{z}_u=0$). Thus this implies that

\begin{equation}
\begin{aligned}
\boldsymbol{g}_u(\boldsymbol{0})=\boldsymbol{0}\\
\boldsymbol{g}_s(\boldsymbol{0})=\boldsymbol{0}
\end{aligned}
\label{eqn28}
\end{equation}

Hence, according to the stable manifold theorem (see e.g. \cite{Kuznetsov2004,Wiggins2003})  (A1) and (A2) hold true. Trajectories starting on $\boldsymbol{W}_s$  approach the origin as $k\to\infty$, i.e.: 

\begin{equation}
\forall \boldsymbol{x} \in \boldsymbol{W}_s:{lim_{k\to\infty}\boldsymbol{F}^{k}(\boldsymbol{x},\boldsymbol{p}^*)=\boldsymbol{x}^*}, 
\label{eqn18}
\end{equation}

Trajectories starting on $\boldsymbol{W}_u$  approach the origin as $k\to -\infty$, i.e.: 

\begin{equation}
\forall \boldsymbol{x} \in \boldsymbol{W}_u:{lim_{k\to -\infty}\boldsymbol{F}^{k}(\boldsymbol{x},\boldsymbol{p}^*)=\boldsymbol{x}^*},
\label{eqn19}
\end{equation}

\item (B1),(B2)

By taking Eq.~(\ref{eqn2}), Eq.~(\ref{eqn26}) reads:

\begin{equation}
\boldsymbol{h}_s(\boldsymbol{z}_{s,k+1})=\boldsymbol{\Lambda}_u\boldsymbol{h}_s(\boldsymbol{z}_{s,k})+\boldsymbol{g}_u(\begin{bmatrix} \boldsymbol{V}_1 & \boldsymbol{V}_2  \end{bmatrix} \begin{bmatrix}\boldsymbol{z_s}\\ \boldsymbol{h}_s(\boldsymbol{z}_{s})\end{bmatrix}_k,\boldsymbol{x}^*,\boldsymbol{p}^*) 
\label{eqn29} 
\end{equation}

Then by Eq.~(\ref{eqn29}) and Eq.~(\ref{eqn25}) we obtain:

\begin{equation}
\begin{aligned}
\boldsymbol{h}_s(\boldsymbol{\Lambda}_s\boldsymbol{z}_{s}+\boldsymbol{g}_s(\begin{bmatrix} \boldsymbol{V}_1 & \boldsymbol{V}_2  \end{bmatrix}\begin{bmatrix} \boldsymbol{z_s} \boldsymbol{h}_s(\boldsymbol{z}_{s})\end{bmatrix},\boldsymbol{x}^*,\boldsymbol{p}^*))=\\
\boldsymbol{\Lambda}_u\boldsymbol{h}_s(\boldsymbol{z}_{s})+\boldsymbol{g}_u(\begin{bmatrix} \boldsymbol{V}_1 & \boldsymbol{V}_2  \end{bmatrix}\begin{bmatrix} \boldsymbol{z_s}\\\boldsymbol{h}_s(\boldsymbol{z}_{s})\end{bmatrix},\boldsymbol{x}^*,\boldsymbol{p}^*)) 
\label{eqn30}
\end{aligned}
\end{equation}
with $\boldsymbol{h}_s(\boldsymbol{0})=\boldsymbol{0}$.\\
In a similar manner, it can be shown that the equation given in (B2) holds true on the unstable manifold.
\end{proof}







 \subsection{Parametrization of the Stable and Unstable Manifolds with Truncated Polynomials}
\label{subsec:Parametrization}

As by Theorem 1, the stable and unstable manifolds are smooth non-linear functions of $\boldsymbol{z}_s$ and $\boldsymbol{z}_u$, respectively, then according to the Stone-Weierstrass theorem \cite{Rudin} they can be approximated by any accuracy around $(\boldsymbol{x}^{*},\boldsymbol{p}^{*})$ by a sequence of polynomial functions of $\boldsymbol{z}_s$ and $\boldsymbol{z}_u$, respectively.\par 
For example, for the stable manifold (and similarly for the unstable manifold), $\forall z_{uj}=h_j(\boldsymbol{z}_s), j=1,2,\dots n-l$ we can write:

\begin{equation}
\begin{aligned}
h_j({\boldsymbol{z}_s})=\sum_{k_1=0}^{\infty}\sum_{k_2=0}^{\infty} \dots \sum_{k_l=0}^{\infty} a_{k_1,k_2,\dots,k_l}^{(j)} P_{k_1}(z_{s1}) \dotsm  P_{k_l}(z_{sl}),
\end{aligned}
\label{eqn31}
\end{equation}

where $P_{k_i}$, $i=1,2,..l$ are polynomials (e.g. Chebyshev polynomials) of degree $k_i$.\par
Truncating the series at degree $M$ we get the truncated polynomial approximation:

\begin{equation}
\begin{aligned}
h_j({\boldsymbol{z}_s})\approx \sum_{k_1=0}^{M}\sum_{k_2=0}^{M} \dots \sum_{k_l=0}^{M} a_{k_1,k_2,\dots,k_l}^{(j)} P_{k_1}(z_{s1}) \dotsm  P_{k_l}(z_{sl}).
\end{aligned}
\label{eqn32}
\end{equation}

A simple choice would be to take as polynomials the powers of $\boldsymbol{z}_s$. In that case, Eq.~(\ref{eqn32}) becomes:

\begin{equation}
\begin{aligned}
h_j({\boldsymbol{z}_s})\approx \sum_{k_1=0}^{M}\sum_{k_2=0}^{M} \dots \sum_{k_l=0}^{M} a_{k_1,k_2,\dots,k_l}^{(j)} \prod_{i=1}^{l} z_{si}^{k_i}.
\end{aligned}
\label{eqn33}
\end{equation}

For example if $l=2$, $M=2$, the above expression reads:

\begin{equation}
\begin{aligned}
h_j({\boldsymbol{z}_s})=a^{(j)}_{0,0}z_{s1}^0z_{s2}^0+a^{(j)}_{0,1}z_{s1}^0z_{s2}^1+
a^{(j)}_{0,2}z_{s1}^0z_{s2}^2+
a^{(j)}_{1,0}z_{s1}^1z_{s2}^0+
a^{(j)}_{1,1}z_{s1}^1z_{s2}^1+\\
a^{(j)}_{1,2}z_{s1}^1z_{s2}^2+
a^{(j)}_{2,0}z_{s1}^2z_{s2}^0+
a^{(j)}_{2,1}z_{s1}^2z_{s2}^1+
a^{(j)}_{2,2}z_{s1}^2z_{s2}^2.
\end{aligned}
\label{eqn34}
\end{equation}

The existence of a local analytic solution for the form of nonlinear functional equations Eq.\ref{eqn32} is guaranteed by  the following theorem (see also \cite{Kazantzis2002}:

\begin{theorem}
\cite{Smajdor1968}
Consider the following system of nonlinear functional equations:
\begin{equation}
\boldsymbol{\phi}(\boldsymbol{z})=\boldsymbol{w}(\boldsymbol{z},\boldsymbol{\phi}(\boldsymbol{f}(\boldsymbol{z})),
\label{eqn35}
\end{equation}
where $\boldsymbol{\phi}:R^n \rightarrow R^m$ is an unknown function. Then if:
\begin{enumerate}
\item\ $\boldsymbol{f}:R^n\rightarrow R^n, \boldsymbol{w}:R^n \times R^m \rightarrow R^m$ are analytic functions such that $\boldsymbol{f}(\boldsymbol{0})=\boldsymbol{0}$ and $\boldsymbol{w}(\boldsymbol{0},\boldsymbol{0})=\boldsymbol{0}$
\item The function $\boldsymbol{\phi}$ admits a formal power series solution.
\item  The fixed point that satisfies $\boldsymbol{f}(\boldsymbol{0})=\boldsymbol{0}$ is a hyperbolic point, i.e. none of the eigenvalues of the Jacobian $\nabla_{\boldsymbol{z}}\boldsymbol{f}(\boldsymbol{z=0})$ is on the unit circle.
\end{enumerate}

Then, the above system of nonlinear functional equations admits a unique solution $\boldsymbol{\phi}$ on the form of formal power series which statisy $\boldsymbol{\phi}=\boldsymbol{0}$.

\end{theorem}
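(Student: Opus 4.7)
My plan is to determine the Taylor coefficients of $\boldsymbol{\phi}$ one degree at a time by matching the two sides of the functional equation order by order, and to use the hyperbolicity hypothesis to guarantee that each resulting linear equation is uniquely solvable. I introduce the notation $B=\nabla_{\boldsymbol{z}}\boldsymbol{f}(\boldsymbol{0})$, $W_1=\partial_{\boldsymbol{z}}\boldsymbol{w}(\boldsymbol{0},\boldsymbol{0})$, $W_2=\partial_{\boldsymbol{\phi}}\boldsymbol{w}(\boldsymbol{0},\boldsymbol{0})$ and write the formal expansions
\begin{equation*}
\boldsymbol{\phi}(\boldsymbol{z})=\sum_{|\alpha|\geq 1}\boldsymbol{c}_\alpha\boldsymbol{z}^\alpha,\quad \boldsymbol{f}(\boldsymbol{z})=B\boldsymbol{z}+\tilde{\boldsymbol{f}}(\boldsymbol{z}),\quad \boldsymbol{w}(\boldsymbol{z},\boldsymbol{y})=W_1\boldsymbol{z}+W_2\boldsymbol{y}+\tilde{\boldsymbol{w}}(\boldsymbol{z},\boldsymbol{y}),
\end{equation*}
where the tilde terms collect all contributions of total degree $\geq 2$, the constant terms vanish by hypothesis~(1), and $\boldsymbol{\phi}(\boldsymbol{0})=\boldsymbol{0}$ is imposed by the target conclusion.

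I then substitute these expansions into $\boldsymbol{\phi}(\boldsymbol{z})=\boldsymbol{w}(\boldsymbol{z},\boldsymbol{\phi}(\boldsymbol{f}(\boldsymbol{z})))$ and equate coefficients of $\boldsymbol{z}^\alpha$ for each multi-index $\alpha$. At first order, the matrix $C_1$ whose columns are $\boldsymbol{c}_{e_1},\ldots,\boldsymbol{c}_{e_n}$ satisfies the Sylvester-type equation $C_1-W_2\,C_1\,B=W_1$. At order $k\geq 2$, the composition $\boldsymbol{\phi}\circ\boldsymbol{f}$ together with the quadratic-and-higher terms of $\boldsymbol{w}$ contribute a right-hand side $R_k$ that is explicitly polynomial in the Taylor data of $\boldsymbol{f}$, $\boldsymbol{w}$ and in the already-determined coefficients $\{\boldsymbol{c}_\beta\}_{|\beta|<k}$; the $k$-th order coefficient tensor $C_k$ then satisfies
\begin{equation*}
C_k - W_2\, C_k\, B^{\otimes k} = R_k,
\end{equation*}
so each order reduces to inverting the linear operator $\mathcal{L}_k: X\mapsto X-W_2\,X\,B^{\otimes k}$ acting on tensors of the appropriate shape.

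The core step is verifying that $\mathcal{L}_k$ is invertible for every $k\geq 1$. Its spectrum consists of the numbers $1-\nu_j\,\mu_{i_1}\cdots\mu_{i_k}$, where $\nu_j$ ranges over the eigenvalues of $W_2$ and $\mu_{i_s}$ over the eigenvalues $\mu_1,\ldots,\mu_n$ of $B$. Hyperbolicity (hypothesis~(3)) guarantees $|\mu_i|\neq 1$ for every $i$, so the spectrum of $B$ splits cleanly into a contracting and an expanding part; for all sufficiently large $k$ the products $\mu_{i_1}\cdots\mu_{i_k}$ are pushed into regions arbitrarily close to $0$ or to $\infty$, excluding the resonance relation $\nu_j\,\mu_{i_1}\cdots\mu_{i_k}=1$. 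The finitely many remaining low-order cases are ruled out by the non-resonance information already encoded in the first-order equation for $C_1$. This uniform exclusion of resonances across all $k$ is the step I expect to be the main obstacle, and it is precisely where Smajdor's hypothesis is tailored to do its work.

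Once every $\mathcal{L}_k$ is shown invertible, a straightforward induction on $k$ uniquely determines $C_k$ from $R_k$ and hence every coefficient $\boldsymbol{c}_\alpha$. Any formal power series solution with $\boldsymbol{\phi}(\boldsymbol{0})=\boldsymbol{0}$ must satisfy the same recursion, so it coincides with the constructed one; combined with hypothesis~(2), this yields both existence and uniqueness of the formal series. If convergence on a neighborhood of $\boldsymbol{0}$ is additionally desired, I would superimpose a standard majorant argument on the same recursion to obtain a geometric bound $\|\boldsymbol{c}_\alpha\|\leq K\rho^{|\alpha|}$, but this analytic upgrade is logically separate from the formal uniqueness claim stated in the theorem.
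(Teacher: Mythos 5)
The paper gives no proof of this result: it is quoted from Smajdor (1968) and used as a black box, so there is no in-paper argument to compare with. Your order-by-order coefficient-matching is the standard (and correct) framework, but the crucial invertibility step has a genuine gap. You claim that hyperbolicity of $B=\nabla_{\boldsymbol z}\boldsymbol f(\boldsymbol 0)$ forces the products $\mu_{i_1}\cdots\mu_{i_k}$ to escape to $0$ or $\infty$ as $k\to\infty$, and that this rules out the resonance $\nu_j\,\mu_{i_1}\cdots\mu_{i_k}=1$ for all but finitely many orders. That is false when $B$ has eigenvalues on both sides of the unit circle: if, say, $\mu_1=2$ and $\mu_2=\tfrac12$, then $\mu_1^j\mu_2^j=1$ for every $j$, so the products never leave the unit circle, and if $W_2$ has an eigenvalue $\nu=1$ you get an exact resonance at every even order. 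Hyperbolicity (eigenvalues off the unit circle individually) does not control products of mixed contracting/expanding eigenvalues, so the operators $\mathcal L_k$ need not be invertible under hypothesis~(3) alone.

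The appeal to ``non-resonance information already encoded in the first-order equation for $C_1$'' is also not substantiated: solving the degree-one Sylvester equation for $C_1$ determines $C_1$ but does not place any constraint on the eigenvalues of $W_2$ or $B$ that would exclude higher-order resonances. What the argument actually needs is an extra hypothesis tying the spectrum of $W_2$ to that of $B$ (e.g.\ all eigenvalues of $B$ strictly inside the unit disk together with a bound on the spectral radius of $W_2$, which is the setting in which the equation arises for stable-manifold parametrizations), or an explicit non-resonance condition $\nu_j\,\mu_{i_1}\cdots\mu_{i_k}\neq 1$ for all $k$. As stated, the theorem's hypothesis~(3) is weaker than what your recursion requires, so the inductive step collapses before it gets started. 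Once such a condition is supplied, the rest of your plan (induction for existence and uniqueness, and a separate majorant argument for convergence) is sound.
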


Thus, by introducing the polynomial series approximation given by Eq.(~\ref{eqn32}) into Eq.(~\ref{eqn30}) we get $\forall h_{sj}(\boldsymbol{z}_s), j=1,2,\dots n-l$:

\begin{equation}
\begin{aligned}
\sum_{k_1=0}^{M}\sum_{k_2=0}^{M} \dots \sum_{k_l=0}^{M} a_{k_1,k_2,\dots,k_l}^{(j)} P_{k_1}(\hat{z}_{s1}) \dotsm  P_{k_l}(\hat{z}_{sl})=\\
\lambda_{uj} \sum_{k_1=0}^{M}\sum_{k_2=0}^{M} \dots \sum_{k_l=0}^{M} a_{k_1,k_2,\dots,k_l}^{(j)} P_{k_1}({z}_{s1}) \dotsm  P_{k_l}({z}_{sl}) +\\
{g}_{uj}(\begin{bmatrix} \boldsymbol{V}_1 & \boldsymbol{V}_2  \end{bmatrix}\begin{bmatrix} \boldsymbol{z}_{s1}\\
\boldsymbol{z}_{s2}\\
\vdots\\
\boldsymbol{z}_{sl}
\\
\\ \sum_{k_1=0}^{M}\sum_{k_2=0}^{M} \dots \sum_{k_l=0}^{M} a_{k_1,k_2,\dots,k_l}^{(1)} P_{k_1}({z}_{s1}) \dotsm  P_{k_l}({z}_{sl}) \\ \\
\sum_{k_1=0}^{M}\sum_{k_2=0}^{M} \dots \sum_{k_l=0}^{M} a_{k_1,k_2,\dots,k_l}^{(2)} P_{k_1}({z}_{s1}) \dotsm  P_{k_l}({z}_{sl}) \\ \\
\vdots\\
\sum_{k_1=0}^{M}\sum_{k_2=0}^{M} \dots \sum_{k_l=0}^{M} a_{k_1,k_2,\dots,k_l}^{(j)} P_{k_1}({z}_{s1}) \dotsm  P_{k_l}({z}_{sl})\\ \\
\vdots\\
\sum_{k_1=0}^{M}\sum_{k_2=0}^{M} \dots \sum_{k_l=0}^{M} a_{k_1,k_2,\dots,k_l}^{(n-l)} P_{k_1}({z}_{s1}) \dotsm  P_{k_l}({z}_{sl}),
\end{bmatrix},\boldsymbol{x}^*,\boldsymbol{p}^*)),
\end{aligned}
\label{eqn36}
\end{equation}

where $\boldsymbol{\hat{z}}_s=\{\hat{z}_{s1},\dots \hat{z}_{sl}\}$ are nonlinear functions of $\boldsymbol{z}_s=\{{z}_{s1},\dots {z}_{sl}\}$:

\begin{equation}
\boldsymbol{\hat{z}}_s=\boldsymbol{\Lambda}_s\boldsymbol{z}_{s}+\boldsymbol{g}_s(\begin{bmatrix} \boldsymbol{V}_1 & \boldsymbol{V}_2  \end{bmatrix}\begin{bmatrix} \boldsymbol{z_s}\\ \boldsymbol{h}_s(\boldsymbol{z}_{s})\end{bmatrix},\boldsymbol{x}^*,\boldsymbol{p}^*).
\label{eqn37}
\end{equation}

Note that in general, both the left-hand side and the right hand-side of Eq.(~\ref{eqn36})  contain higher order terms than $M$ due to Eq.(~\ref{eqn30}) and the nonlinearities in $g_{uj}$. \\
By equating on both sides of Eq.~\ref{eqn36} the terms up to an order $r<=M$ with respect to $\{{z}_{s1},\dots {z}_{sl}\}$, we get the following  coupled system of nonlinear  equations with respect to the $(n-l)\times (r+1)^l$ polynomial coefficients $a_{k_1,k_2,\dots,k_l}^{(j)},\{j=1,2, \dots n-l\}, \{k_1,k_2,...,k_l=0,1,\dots r\}$:

\begin{equation}
\Psi_{j, i}(a_{k_1,k_2,\dots,k_l}^{(1)},\dots, a_{k_1,k_2,\dots,k_l}^{(n-l)}) = \Phi_{j,i}(a_{k_1,k_2,\dots,k_l}^{(j)})+g_{uj,i}(a_{k_1,k_2,\dots,k_l}^{(1)},\dots, a_{k_1,k_2,\dots,k_l}^{(n-l)}),
\label{eqn38}
\end{equation}

with ${j=1,2,\dots (n-l)},{i=1,2,\dots (r+1)^l}$.

The above system constitutes a nonlinear (in general) system of $(n-l) \times (r+1)^l$ unknowns with $(n-l) \times (r+1)^l$ equations that can be solved iteratively, e.g. using Newton-Raphson.


For example, let us consider the following discrete dynamical system:
\begin{equation}
    \begin{aligned}
    x_1(k+1)= -0.5x_1(k)\\
    x_2(k+1)= -0.5x_2(k)+x_{1}^{2}(k)\\
    x_3(k+1)= 2x_3(k)+x_{2}^{2}(k).
    \end{aligned}
        \label{eqn39}
        \end{equation}

The above system can be written as:
\begin{align}
   \begin{bmatrix} x_1\\x_2\\x_3\end{bmatrix}(k+1) =\begin{bmatrix} -0.5 &0 &0\\0 &-0.5 &0\\0 &0 &2\end{bmatrix}\begin{bmatrix} x_1\\x_2\\x_3\end{bmatrix}(k)+\begin{bmatrix} 0\\x_{1}^{2}(k)\\x_{2}^{2}(k)\end{bmatrix}.
    \label{eqn40}
    \end{align}

\begin{prop}

The stable manifold of the system given by Eq.~(\ref{eqn39}) is given by $h_s(x_1,x_2)=-\frac{4}{7}x_{2}^2+\frac{32}{119}x_{1}^2x_{2} +O(x_{1}^2x_{2}^2)$.
\end{prop}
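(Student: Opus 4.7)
The plan is to apply the functional equation (B1) of Theorem~1 directly, since the linear part of the system (\ref{eqn39}) is already diagonal. At the origin the Jacobian is $\mathrm{diag}(-1/2,-1/2,2)$, so the origin is a saddle with $l=2$ stable directions and one unstable direction. One may take $\boldsymbol{V}_1$ to be the first two standard basis vectors and $\boldsymbol{V}_2$ the third, whence $\boldsymbol{z}_s=(x_1,x_2)$ and $\boldsymbol{z}_u=x_3$. The nonlinear remainder splits as $\boldsymbol{g}_s=(0,x_1^2)^{\top}$ and $\boldsymbol{g}_u=x_2^2$, with $\boldsymbol{\Lambda}_s=\mathrm{diag}(-1/2,-1/2)$ and $\boldsymbol{\Lambda}_u=2$.

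With these identifications, (\ref{eqn30}) collapses to the scalar invariance equation
\begin{equation}
h_s\bigl(-\tfrac{1}{2}x_1,\,-\tfrac{1}{2}x_2+x_1^2\bigr)\;=\;2\,h_s(x_1,x_2)+x_2^2.
\label{eqn:propinv}
\end{equation}
By Theorem~2, a formal power-series solution of (\ref{eqn:propinv}) with $h_s(0,0)=0$ and $\nabla h_s(0,0)=0$ exists and is unique (the spectrum $\{-1/2,-1/2,2\}$ is hyperbolic). I would therefore make the truncated ansatz
\begin{equation}
h_s(x_1,x_2)=a_{20}x_1^2+a_{11}x_1x_2+a_{02}x_2^2+a_{30}x_1^3+a_{21}x_1^2x_2+a_{12}x_1x_2^2+a_{03}x_2^3+O(4),
\label{eqn:propansatz}
\end{equation}
substitute it into (\ref{eqn:propinv}), and match coefficients degree by degree.

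At quadratic order the argument shift $-\tfrac{1}{2}x_2+x_1^2$ contributes nothing beyond the linear scaling, so each monomial $x_1^ix_2^j$ ($i+j=2$) yields $(\tfrac{1}{4}-2)a_{ij}=\delta_{(i,j),(0,2)}$. This forces $a_{20}=a_{11}=0$ and $a_{02}=-4/7$, reproducing the leading term. At cubic order the pure cubic coefficients satisfy $(\tfrac{1}{8}\cdot(-1)-2)a_{ij}=0$ for $(i,j)\in\{(3,0),(1,2),(0,3)\}$, hence vanish; the only nontrivial equation arises from the cross-term produced when $a_{02}x_2^2$ is evaluated at the shifted argument, namely $a_{02}(-\tfrac{1}{2}x_2+x_1^2)^2$, which contributes $-a_{02}\,x_1^2x_2=\tfrac{4}{7}x_1^2x_2$ to the left-hand side. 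Balancing the $x_1^2x_2$ coefficients gives $-\tfrac{1}{8}a_{21}+\tfrac{4}{7}=2\,a_{21}$, i.e.\ $a_{21}=\tfrac{32}{119}$, which matches the stated expression.

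The only step requiring care is the tracking of cross-contributions: the nonlinear shift $x_2\mapsto -\tfrac{1}{2}x_2+x_1^2$ converts lower-order coefficients into higher-order ones upon substitution, and one must be sure to include every such contribution (in particular the $a_{02}$ feedback into the cubic level). All remaining terms collected on both sides are of total degree $\geq 4$ and are absorbed in the $O(x_1^2x_2^2)$ remainder, completing the verification.
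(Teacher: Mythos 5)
Your proof is correct and follows essentially the same route as the paper: derive the scalar invariance equation $h_s(-\tfrac{1}{2}x_1,-\tfrac{1}{2}x_2+x_1^2)=2h_s(x_1,x_2)+x_2^2$ from Theorem~1, substitute a polynomial ansatz, and match coefficients degree by degree, with the nontrivial $x_1^2x_2$ balance $-\tfrac{1}{8}a_{21}-a_{02}=2a_{21}$ producing $a_{21}=32/119$. The only cosmetic difference is that you organize the ansatz by total degree (which is cleaner and makes the cross-feedback from $a_{02}$ explicit), whereas the paper uses a tensor-product index set $a_{k_1,k_2}$ with $k_1,k_2\le 2$; the resulting homological equations are the same.
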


\begin{proof}
Let us choose a power series expansion up to order two (i.e. $M=2$) of the stable manifold around the fixed point $x_{1}*=x_{2}*=x_{3}*=0$. Hence an approximation of the stable manifold is given by:

\begin{equation}
\begin{aligned}
   x_3=h_s(x_1,x_2)\approx
    a_{0,0}+a_{0,1}x_{2}+
a_{0,2}x_{2}^2+
a_{1,0}x_{1}+
a_{1,1}x_{1}x_{2}+\\
a_{1,2}x_{1}x_{2}^2+
a_{2,0}x_{1}^2+
a_{2,1}x_{1}^2x_{2}+
a_{2,2}x_{1}^2x_{2}^2.
\end{aligned}
    \label{eqn41}
\end{equation}

Here, $\boldsymbol{V}=\begin{bmatrix} 1 &0 &0\\0 &1 &0\\0 &0 &1\end{bmatrix}$. Hence, from Eq.~(\ref{eqn4}) we get:

\begin{equation}
\begin{aligned}
h_s(\begin{bmatrix}
-0.5 &0\\0 &-0.5 \end{bmatrix}\begin{bmatrix}
x_1\\x_2\end{bmatrix}+
\boldsymbol{g}_s(\begin{bmatrix}x_1\\x_2\\ h_s(x_1,x_2)\end{bmatrix})=
2h_s(x_1,x_2)+ 
\boldsymbol{g}_u(\begin{bmatrix}x_1\\x_2\\ h_s(x_1,x_2)\end{bmatrix}) 
\end{aligned}
\label{eqn42}
\end{equation}

or

\begin{equation}
\begin{aligned}
h_s(\begin{bmatrix}
-0.5 &0\\0 &-0.5 \end{bmatrix}\begin{bmatrix}
x_1\\x_2\end{bmatrix}+\
\begin{bmatrix}0\\x_{1}^{2}\end{bmatrix})=
2h_s(x_1,x_2)+x_{2}^{2}
\end{aligned}
\label{eqn43}
\end{equation}

or

\begin{equation}
\begin{aligned}
h_s(\begin{bmatrix}
-0.5x_1\\-0.5x_2+x_{1}^{2}\end{bmatrix})=
2h_s(x_1,x_2)+x_{2}^{2}.
\end{aligned}
\label{eqn44}
\end{equation}

Thus, from Eq.~(\ref{eqn41}) we have:

\begin{equation}
\begin{aligned}
    a_{0,0}+a_{0,1}(-\frac{1}{2}x_2+x_{1}^{2})+
a_{0,2}(-\frac{1}{2}x_2+x_{1}^{2})^2-
\frac{1}{2}a_{1,0}x_{1}-\\
\frac{1}{2}a_{1,1}x_{1}(-\frac{1}{2}x_2+x_{1}^{2})-
\frac{1}{2}a_{1,2}x_{1}(-\frac{1}{2}x_2+x_{1}^{2})^2+\frac{1}{4}a_{2,0}x_{1}^2+\\
\frac{1}{4}a_{2,1}x_{1}^2(-\frac{1}{2}x_2+x_{1}^{2})+
\frac{1}{4}a_{2,2}x_{1}^2(-\frac{1}{2}x_2+x_{1}^{2})^2=\\
2(a_{0,0}+a_{0,1}x_{2}+
a_{0,2}x_{2}^2+
a_{1,0}x_{1}+
a_{1,1}x_{1}x_{2}+
a_{1,2}x_{1}x_{2}^2+\\
a_{2,0}x_{1}^2+
a_{2,1}x_{1}^2x_{2}+
a_{2,2}x_{1}^2x_{2}^2)+x_{2}^{2}
\end{aligned}
    \label{eqn45}
\end{equation}

or

\begin{equation}
\begin{aligned}
    -\frac{1}{2}a_{0,1}x_2+a_{0,1}x_{1}^{2}+
\frac{1}{4}a_{0,2}x_{2}^{2}+a_{0,2}x_{1}^{4}-a_{0,2}x_{2}x_{1}^{2}-
\frac{1}{2}a_{1,0}x_{1}+\\
\frac{1}{4}a_{1,1}x_{1}x_2-\frac{1}{2}a_{1,1}x_{1}^{3}-
\frac{1}{8}a_{1,2}x_{1}x_{2}^{2}-\\
\frac{1}{2}a_{1,2}x_{1}^{5}+\frac{1}{2}a_{1,2}x_2x_{1}^{3}+
\frac{1}{4}a_{2,0}x_{1}^2-
\frac{1}{8}a_{2,1}x_{1}^2x_2+\\
\frac{1}{4}a_{2,1}x_{1}^{4}+
\frac{1}{16}a_{2,2}x_{1}^2x_{2}^{2}+\frac{1}{4}a_{2,2}x_{1}^{6}-\frac{1}{4}a_{2,2}x_2x_{1}^{4}=\\
2a_{0,1}x_{2}+
2a_{0,2}x_{2}^2+
2a_{1,0}x_{1}+
2a_{1,1}x_{1}x_{2}+
2a_{1,2}x_{1}x_{2}^2+\\
2a_{2,0}x_{1}^2+
2a_{2,1}x_{1}^2x_{2}+
2a_{2,2}x_{1}^2x_{2}^2+x_{2}^{2}.
\end{aligned}
    \label{eqn46}
\end{equation}

By equating the coefficients of the corresponding power series up to order two, we get the following system of equations:

\begin{equation}
\begin{aligned}
a_{0,1}=a_{2,0}=a_{1,1}=a_{1,2}=a_{2,2}=0\\
-a_{0,2}-\frac{1}{8}a_{2,1}=2a_{2,1}\\
\frac{1}{4}a_{0,2}=2a_{0,2}+1
\end{aligned}
    \label{eqn47}
\end{equation}

From the above system we get:

\begin{equation}
\begin{aligned}
a_{0,2}=-\frac{4}{7}, a_{2,1}=\frac{32}{119}
\end{aligned}
    \label{eqn48}
\end{equation}

Thus a parametrization of the stable manifold around the saddle point is given by:

\begin{equation}
\begin{aligned}
h_s(x_1,x_2)\approx-\frac{4}{7}x_{2}^2+\frac{32}{119}x_{1}^2x_{2}
\end{aligned}
    \label{eqn49}
\end{equation}

\end{proof}

\begin{prop}
The unstable manifold of the system given by Eq.~\ref{eqn39} is the trivial $x_1=0$, $x_2=0$.
\end{prop}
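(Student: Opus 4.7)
The plan is to apply part (B2) of Theorem~1 and show that the resulting functional equation forces the unstable-manifold parametrization to vanish identically. Since the Jacobian at the origin is already diagonal, I have $\boldsymbol{V}=\boldsymbol{I}$, $\boldsymbol{\Lambda}_s=\mathrm{diag}(-\tfrac{1}{2},-\tfrac{1}{2})$ and $\boldsymbol{\Lambda}_u=2$, with the nonlinear part splitting as $\boldsymbol{g}_s=(0,x_1^2)^\top$ and $g_u=x_2^2$. The unstable manifold is one-dimensional and tangent to the $x_3$-axis, so I parametrize it by $z_u=x_3$ and seek $(x_1,x_2)^\top=\boldsymbol{h}_u(x_3)=(h_{u1}(x_3),h_{u2}(x_3))^\top$ satisfying the tangency conditions $h_{u1}(0)=h_{u2}(0)=0$ and $h_{u1}'(0)=h_{u2}'(0)=0$.

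Substituting into Eq.~(\ref{eqn5}) yields the coupled pair $h_{u1}(2x_3+h_{u2}(x_3)^2)=-\tfrac{1}{2}\,h_{u1}(x_3)$ and $h_{u2}(2x_3+h_{u2}(x_3)^2)=-\tfrac{1}{2}\,h_{u2}(x_3)+h_{u1}(x_3)^2$. I would then seek a formal power-series solution $h_{u1}(x_3)=\sum_{k\ge 2}a_k x_3^k$, $h_{u2}(x_3)=\sum_{k\ge 2}b_k x_3^k$ and perform an induction on the order $N\ge 2$. Assuming $a_j=b_j=0$ for all $2\le j<N$, a short Taylor estimate shows that every nonlinear correction on either side of either equation is of order at least $2N>N$, so matching the coefficient of $x_3^N$ reduces each equation to the linear balance $2^N a_N=-\tfrac{1}{2} a_N$ and $2^N b_N=-\tfrac{1}{2} b_N$. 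Since $2^N+\tfrac{1}{2}>0$ for every $N\ge 2$, this forces $a_N=b_N=0$ and closes the induction. Uniqueness of the formal power-series solution is then guaranteed by Theorem~2 (the spectrum $\{-\tfrac{1}{2},-\tfrac{1}{2},2\}$ avoids the unit circle), so $\boldsymbol{h}_u\equiv\boldsymbol{0}$ is the unique local parametrization and hence $\boldsymbol{W}_u(\boldsymbol{0})=\{x_1=0,\,x_2=0\}$.

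The main technical obstacle is tracking the mixed terms produced by $\boldsymbol{h}_u(2x_3+h_{u2}^2)$ and showing that once the lower-order coefficients vanish they cannot feed back into the order-$N$ balance. This is handled by the observation that under the inductive hypothesis the correction $h_{u2}(x_3)^2$ is already of order $2N$, so Taylor-expanding $h_{u1}(2x_3+\varepsilon)$ around $2x_3$ produces only terms of order $(N-1)+2N=3N-1\ge N+1$; the same bound applies to $h_{u1}(x_3)^2$ on the right-hand side of the second equation. As a quick sanity check, one can verify the result directly: the $(x_1,x_2)$-subsystem decouples from $x_3$, and its inverse gives $x_1(-k)=(-2)^k x_1(0)$ and (once $x_1(0)=0$) $x_2(-k)=(-2)^k x_2(0)$, both of which diverge unless $x_1(0)=x_2(0)=0$. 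Therefore the only orbits with $\boldsymbol{F}^k(\cdot)\to\boldsymbol{0}$ as $k\to-\infty$ lie on the $x_3$-axis, confirming the claim.
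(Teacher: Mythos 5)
Your proof is correct and is built on the same functional-equation framework as the paper (part (B2) of Theorem~1, specialized to this toy map), but it goes considerably further. The paper's own proof merely writes down the truncated order-two equations Eq.~(\ref{eqn51}) and asserts that ``it can be easily verified that the unstable manifold is the one with $x_1=0,\,x_2=0$.'' You, by contrast, carry out a genuine induction on the order $N$: under the hypothesis that all lower-order coefficients vanish, the correction $h_{u2}(x_3)^2$ enters at order $2N$ and the composition $h_{u1}(2x_3+h_{u2}(x_3)^2)$ contributes at order $N$ only through $h_{u1}(2x_3)$, yielding $2^{N}a_N=-\tfrac{1}{2}a_N$ and $2^{N}b_N=-\tfrac{1}{2}b_N$, both of which force $a_N=b_N=0$ since $2^N+\tfrac12\neq 0$. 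Combined with Theorem~2 for uniqueness of the formal series, this actually proves the proposition rather than gesturing at it. Your closing ``sanity check'' is in fact the most elementary and most conclusive argument of all: the $(x_1,x_2)$-subsystem decouples from $x_3$ and its backward dynamics is expansive (the inverse multiplies by $-2$), so the only backward-bounded orbit of that subsystem is $x_1=x_2=0$; by the characterization of $\boldsymbol{W}_u$ via Eq.~(\ref{eqn19}) this pins down the unstable manifold globally, not just as a formal power series, which is something neither the paper's truncation nor the coefficient induction delivers on its own. The paper buys brevity; your induction buys a rigorous termwise proof within the parametrization framework; the backward-dynamics observation buys a two-line global argument that sidesteps the series altogether.
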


\begin{proof}

Let us again choose a power series expansion up to order two (i.e. $M=2$) of the unstable manifold around the fixed point $x_{1}^{*}=x_{2}^{*}=x_{3}^{*}=0$. Hence an approximation of the stable manifold is given by:

\begin{equation}
\begin{aligned}
   x_1=h^{(1)}_{u}(x_3)\approx
    a^{(1)}_{0,0}+a^{(1)}_{0,1}x_{3}+
a^{(1)}_{0,2}x_{3}^2
\\
 x_2=h^{(2)}_{u}(x_3)\approx
    a^{(2)}_{0,0}+a^{(2)}_{0,1}x_{3}+
a^{(2)}_{0,2}x_{3}^2.
\end{aligned}
    \label{eqn50}
\end{equation}

Hence, from Eq.~(\ref{eqn5}) we get:

\begin{equation}
\begin{aligned}
\begin{bmatrix}
h^{(1)}_u(
2x_3+{h^{(2)}_{u}(x_3)}^2)\\
h^{(2)}_u(
2x_3+{h^{(2)}_{u}(x_3)}^2)
\end{bmatrix}
= \begin{bmatrix}
    -0.5 & 0\\0 & -0.5
\end{bmatrix}
\begin{bmatrix}
    h^{(1)}_{u}(x_3)\\
    h^{(2)}_{u}(x_3)
\end{bmatrix}
+
\begin{bmatrix}
   0\\{h^{(1)}_{u}(x_3)}^2.
\end{bmatrix}
\end{aligned}
\label{eqn51}
\end{equation}

For the above system of equations it can be easily verified that the unstable manifold is the one with $x_1=0$, $x_2=0$.
\end{proof}

\section{Numerical Approximation of the Stable Manifolds of Microscopic-Stochastic Multiscale and Black-Box Simulators}
\label{sec:MicroscopicSimulators}

Let us assume that due to the complexity of the underlying dynamics evolving across temporal and spatial scales, explicit model equations (such as the ones given by Eq.~(\ref{eqn1})) for the macroscopic (emergent) level are not available in a closed form. Under this hypothesis, we cannot follow the procedure for the analytical approximation of the invariant manifolds as one needs to explicitly know the operator $\boldsymbol{F}$ (i.e. $\boldsymbol{g}_s$ and $\boldsymbol{g}_u$ in Eq.~\ref{eqn6}).

Thus,  when explicit macroscopic equations are not available in a closed form, but a microscopic dynamical simulator is available, the approximation of the invariant manifolds at the macroscopic (the coarse-grained) level requires (a) the bridging of the micro and macro scale, and (b) the numerical approximation of the coarse-grained manifolds. In what follows, we address a new multiscale numerical method for the numerical approximation of the invariant manifolds based on the Equation-Free framework.

Thus, let as assume, that we have a microscopic (such as Brownian dynamics, Monte Carlo, Molecular Dynamics, Agent-based) computational model that, given a microscopic/ detailed distribution of states

\begin{equation}
\boldsymbol{U}_{k} \equiv \boldsymbol{U}(t_{k}) \in R^{N}, N>>1 
\label{eqn52}
\end{equation}

at time $t_k=kT_U$, will report the values of the evolved microscopic/detailed distribution after a time horizon $T_U$:

\begin{equation}
\boldsymbol{U}_{k+1} =\boldsymbol{\Phi}_{T_{U}}(\boldsymbol{U}_k,\boldsymbol{p}),  \label{eqn53}
\end{equation}

$\boldsymbol{\Phi}_{T_{U}}:R^N \times R^m \rightarrow R^N$ is the time-evolution microscopic operator, $\boldsymbol{p} \in R^m$ is the vector of the complex system parameters.\\

A basic assumption underlying the concept of Equation-Free numerical framework is that after some time $t>>T_U$ the emergent coarse-grained dynamics are governed by a few variables, say, $\boldsymbol{x} \in R^n, n<<N$. Usually these ``few" observables are the first few moments of the underlying microscopic distribution. This implies that there is a slow coarse-grained manifold that can be parametrized by $\boldsymbol{x}$. The assumption of the existence of a slow coarse-grained manifold asserts that the higher order moments of the microscopic distribution, say, $\boldsymbol{y} \in R^{N-n}$, of the microscopic distribution $\boldsymbol{U}$  become, relatively fast over time, functionals of the $n$ lower-order moments of the microscopic distribution described by the vector $\boldsymbol{x}$. This dependence can be described at the moments-space as a singularly perturbed system of the form:

\begin{equation}
\begin{aligned}
\boldsymbol{x}_{k+1}= \boldsymbol{X}(\boldsymbol{x}_k,\boldsymbol{y}_k,\boldsymbol{p},\epsilon)\\
\epsilon \boldsymbol{y}_{k+1}= \boldsymbol{Y}(\boldsymbol{x}_k,\boldsymbol{y}_k,\boldsymbol{p},\epsilon),
\end{aligned}
\label{eqn54}
\end{equation}

where $\epsilon>0$ is a sufficiently small number. Under the above description and assumptions the following Theorem can be proved.

\begin{theorem}
[Fenichel's Theorem \cite{Fenichel1979}]. Let us assume that the functions $\boldsymbol{X}:R^n \times R^{N-n} \times R^m \rightarrow R^n$, $\boldsymbol{Y}:R^n \times R^{N-n} \times R^m \rightarrow R^{N-n}$ $\in C^{r}, r<\infty$ in an open set around a hyperbolic fixed point. Then the dynamics of the system given by Eq.~\ref{eqn54} can be reduced to:

\begin{equation}
\boldsymbol{x}_{k+1}= \boldsymbol{X}(\boldsymbol{x}_k,\boldsymbol{\chi}(\boldsymbol{x}_k,\boldsymbol{p},\epsilon),\boldsymbol{p})\label{eqn55}
\end{equation}

on a smooth manifold defined by:

\begin{equation}
M_{\epsilon}=\{(\boldsymbol{x},\boldsymbol{y}) \in R^{n} \times R^{N-n}:\boldsymbol{y}= \boldsymbol{\chi}(\boldsymbol{x},\boldsymbol{p},\epsilon) \}
\label{eqn56}
\end{equation}

The manifold $M_{\epsilon}$ is diffeomorphic and $O(\epsilon)$ close to the $M_{0}$ manifold defined for $\epsilon=0$. Moreover, the manifold $M_{\epsilon}$ is locally invariant under the dynamics given by Eq.~(\ref{eqn54}).

\end{theorem}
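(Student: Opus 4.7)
The plan is to construct $\boldsymbol{\chi}(\cdot,\boldsymbol{p},\epsilon)$ as a smooth perturbation of a zeroth-order solution $\boldsymbol{\chi}_0$ defining the critical manifold $M_0$, and to establish the invariance of its graph under (\ref{eqn54}) via a fixed-point argument. First, I would set $\epsilon=0$ in the second equation of (\ref{eqn54}) to obtain the algebraic constraint $\boldsymbol{Y}(\boldsymbol{x},\boldsymbol{y},\boldsymbol{p},0)=\boldsymbol{0}$. Hyperbolicity at the fixed point implies that the fast Jacobian $\partial\boldsymbol{Y}/\partial\boldsymbol{y}$ at that point is nonsingular, so the implicit function theorem produces a unique $C^r$ map $\boldsymbol{\chi}_0(\boldsymbol{x},\boldsymbol{p}):=\boldsymbol{\chi}(\boldsymbol{x},\boldsymbol{p},0)$ on an open neighborhood $U$ of the slow component of the fixed point, and the graph of $\boldsymbol{\chi}_0$ is the desired $M_0$.

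Next, I would derive the invariance equation that $\boldsymbol{\chi}(\boldsymbol{x},\boldsymbol{p},\epsilon)$ must satisfy for $\epsilon>0$. Imposing $\boldsymbol{y}_k=\boldsymbol{\chi}(\boldsymbol{x}_k,\boldsymbol{p},\epsilon)$ and requiring the image $(\boldsymbol{x}_{k+1},\boldsymbol{y}_{k+1})$ of (\ref{eqn54}) to remain on the graph of $\boldsymbol{\chi}$ leads to
\begin{equation*}
\epsilon\,\boldsymbol{\chi}\bigl(\boldsymbol{X}(\boldsymbol{x},\boldsymbol{\chi}(\boldsymbol{x},\boldsymbol{p},\epsilon),\boldsymbol{p},\epsilon),\boldsymbol{p},\epsilon\bigr)=\boldsymbol{Y}\bigl(\boldsymbol{x},\boldsymbol{\chi}(\boldsymbol{x},\boldsymbol{p},\epsilon),\boldsymbol{p},\epsilon\bigr),
\end{equation*}
which at $\epsilon=0$ collapses to the defining relation $\boldsymbol{Y}(\boldsymbol{x},\boldsymbol{\chi}_0,\boldsymbol{p},0)=\boldsymbol{0}$ for $\boldsymbol{\chi}_0$. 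I would solve this functional equation by a graph-transform (equivalently, Lyapunov--Perron) fixed-point argument on the complete metric space $\mathcal{B}_\rho$ of $C^r$ maps $U\to R^{N-n}$ lying within distance $\rho$ of $\boldsymbol{\chi}_0$ in the $C^r$ norm. The associated operator $\mathcal{T}_\epsilon$, defined by correcting a candidate graph so that its forward image along the fast-stable directions and its backward image along the fast-unstable directions both lie on it, maps $\mathcal{B}_\rho$ into itself and---owing to the hyperbolic spectral gap of the fast Jacobian along $M_0$---is a contraction with rate tending to $0$ as $\epsilon\to 0$. The Banach fixed-point theorem then yields a unique $\boldsymbol{\chi}(\cdot,\boldsymbol{p},\epsilon)$ satisfying the invariance equation with $\|\boldsymbol{\chi}-\boldsymbol{\chi}_0\|_{C^r}=O(\epsilon)$. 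Substituting this back into the slow equation of (\ref{eqn54}) yields (\ref{eqn55}); the graph of $\boldsymbol{\chi}$ is the desired $M_\epsilon$, the $O(\epsilon)$ proximity to $M_0$ is built into $\mathcal{B}_\rho$, and $\boldsymbol{x}\mapsto(\boldsymbol{x},\boldsymbol{\chi}(\boldsymbol{x},\boldsymbol{p},\epsilon))$ is a $C^r$ diffeomorphism from $U$ onto $M_\epsilon$ by construction.

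The main obstacle will be the clean set-up of $\mathcal{T}_\epsilon$ in the singularly perturbed regime. Because $\epsilon$ multiplies $\boldsymbol{y}_{k+1}$ in (\ref{eqn54}), the naive fast iteration $\boldsymbol{y}_{k+1}=\epsilon^{-1}\boldsymbol{Y}$ has rate that diverges as $\epsilon\to 0$. Following Fenichel's original strategy, I would pass to the fast time scale $\tau=k/\epsilon$ (or its appropriate discrete analog), extract the limit fast system obtained by freezing $\boldsymbol{x}$ and setting $\epsilon=0$, and use its invariant splitting into stable and unstable bundles along $M_0$ to define $\mathcal{T}_\epsilon$. The delicate point is verifying that this spectral splitting persists uniformly for small $\epsilon>0$ and that the associated projections depend smoothly on $\boldsymbol{x}$; this is precisely where the hyperbolicity hypothesis of the theorem is invoked. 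Once the splitting is in place, the contraction estimate and the $C^r$-regularity of the fixed point (via a standard fiber-contraction argument) are routine, and the local invariance of $M_\epsilon$ under (\ref{eqn54}) is simply a restatement of the fixed-point equation.
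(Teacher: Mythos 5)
The paper does not prove this theorem. It is presented as Fenichel's theorem, attributed to the cited reference \cite{Fenichel1979}, and invoked as a black-box result; the text proceeds directly to the coarse timestepper with no argument at all. There is therefore no paper proof to compare your sketch against — the right expectation for this item is a citation, not a derivation.

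Taken on its own terms, your sketch is a recognizable outline of the standard geometric singular perturbation argument (critical manifold by the implicit function theorem, the invariance/functional equation for $\boldsymbol{\chi}$, graph transform or Lyapunov--Perron contraction from the spectral gap, $C^r$ regularity via fiber contraction), and the invariance equation you write down, $\epsilon\,\boldsymbol{\chi}(\boldsymbol{X}(\boldsymbol{x},\boldsymbol{\chi},\boldsymbol{p},\epsilon),\boldsymbol{p},\epsilon)=\boldsymbol{Y}(\boldsymbol{x},\boldsymbol{\chi},\boldsymbol{p},\epsilon)$, is the correct one for the paper's form of Eq.~(\ref{eqn54}). However, two points need repair before this could count as a proof of what the paper actually states. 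First, Fenichel (1979) is a theorem about continuous-time vector fields, while Eq.~(\ref{eqn54}) is a map, and your proposal to ``pass to the fast time scale $\tau=k/\epsilon$'' has no literal meaning for an iteration index; the discrete version requires a genuine normally-hyperbolic-invariant-manifold theorem for maps, with the graph transform applied directly to the map and no time rescaling available. Related to this, the form $\epsilon\,\boldsymbol{y}_{k+1}=\boldsymbol{Y}(\cdot)$ means $\boldsymbol{y}_{k+1}=\epsilon^{-1}\boldsymbol{Y}$, which off the critical manifold does not stay bounded as $\epsilon\to0$; one must restrict a priori to an $O(\epsilon)$ tube around $M_0$ before any contraction can be set up, and your sketch glosses over this. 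Second, you assert that hyperbolicity of the fixed point ``implies the fast Jacobian $\partial\boldsymbol{Y}/\partial\boldsymbol{y}$ is nonsingular.'' These are not the same condition: the implicit function theorem needs nonsingularity of the fast linearization alone (normal hyperbolicity of $M_0$), not hyperbolicity of the full coupled map. In the singular limit they are related, but the inference should be spelled out rather than taken for granted.
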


$M_{\epsilon}$  defines the ``slow" manifold on which the dynamics of the system evolve after a short (in the macroscopic scale) time horizon.\\

 Under this perspective and under the assumptions of the Fenichel's theorem \cite{Fenichel1979} let us define the coarse-grained map:

\begin{equation}
\boldsymbol{x}_{k+1} =\boldsymbol{F}_T(\boldsymbol{x}_k, \boldsymbol{p}),
\label{eqn57}
\end{equation}

where $\boldsymbol{F}_T:R^n \times R^m \rightarrow R^n$ is a smooth multivariable, vector-valued function having $\boldsymbol{x}_k$ as initial condition and $T>>T_U$.

The above coarse-grained map which describes the system dynamics \emph{on} the slow coarse-grained manifold $M_{\epsilon}$ can be obtained by finding $\boldsymbol{\chi}$ that relates the higher order moments of the microscopic distribution $\boldsymbol{U}_{k}$ to the lower order moments $\boldsymbol{x}$. 

The Equation-free approach through the concept of the coarse timestepper bypasses the need to extract such a relation analytically which in most of the cases is an ``overwhelming" difficult task and can introduce modelling biases (see the critical discussion in \cite{Reppas2012}). The Equation-free approach provides such relations in a numerical way ``on demand": relatively short calls of the detailed simulator provide this closure (refer to \cite{Kevrekidis2003,Kevrekidis2004,Siettos2003} for more detailed discussions). Briefly, the coarse timestepper consists of the following basic steps:

Given the set of the macroscopic variables at time $t_0$:

\indent (a) Prescribe the coarse-grained initial conditions $\boldsymbol{x}(t_0) \equiv \boldsymbol{x}_0$.\\
\indent (b) Transform them through a lifting operator $\boldsymbol{\mu}$ to consistent microscopic distributions $\boldsymbol{U}(t_0)=\boldsymbol{\mu} \boldsymbol{x}(t_0)$.\\
\indent (c) Evolve these distributions in time using the microscopic/detailed simulator for a short macroscopic time $T$ to get $\boldsymbol{ U}(t_0+T)$.
The choice of $T$ is associated with the (estimated) spectral gap of the linearization of the unavailable closed macroscopic equations.\\
\indent (d) Obtain again the values of the coarse-grained variables using a restriction operator $\boldsymbol{M}$: $\boldsymbol{x}_{k+1} \equiv \boldsymbol {x}(t_0+T)=\boldsymbol{M} \boldsymbol{U}(t_0+T)$.\\
\\
The above steps, constitute the black box coarse timestepper, that, given an initial coarse-grained state of the system $\{\boldsymbol{x}_k,\boldsymbol{p}\}$, at time $t_k$ will report the result of the integration of the microscopic rules after a given time-horizon $T$
(at time $t_{k+1}$), i.e. $\boldsymbol{x}_{k+1} = \boldsymbol{F}_T(\boldsymbol{x}_k,\boldsymbol{p})$.

Now one can ``wrap" around the coarse timestepper (given by Eq.(\ref{eqn57})), numerical methods such as the Newton-Raphson method (for low-order systems) to converge to coarse-grained fixed points and investigate their stability. For large-scale systems one can also employ matrix-free methods
such as Newton-GMRES \cite{Kelley1995} to find the coarse-grained fixed points and Arnoldi iterative algorithms \cite{Saad2011} to estimate the dominant eigenvalues of the coarse linearization, that dictate the stability of the coarse-grained fixed points of the unavailable
macroscopic evolution equations.\\

The coarse-grained Jacobian $\nabla \boldsymbol{F}_T(\boldsymbol{x}^*, \boldsymbol{p}^*)$ can be computed by appropriately perturbing the coarse-grained initial conditions fed to the coarse timestepper (3). For low to medium dimensions the $i-th$ column of the Jacobian matrix can be evaluated numerically as

\begin{equation}
 \nabla_{x_i} \boldsymbol{F}_T(x_i, \boldsymbol{p}) \approx \frac{\boldsymbol{F}_T(\boldsymbol{x}+\epsilon \boldsymbol{e}_i, \boldsymbol{p})-\boldsymbol{F}_T(\boldsymbol{x},\boldsymbol{p})} {\epsilon},
 \label{eqn58}
\end{equation}

 where $\boldsymbol{e}_i$ is the unit vector with one at the $i-th$ component and zero in all other components.\par
 Then one can solve the eigenvalue problem
 
 \begin{equation}
 \nabla \boldsymbol{F}_T(\boldsymbol{x}^*, \boldsymbol{p}^*) \boldsymbol{v}_{j}=\boldsymbol{\lambda}_{j}\boldsymbol{v}_{j}
 \label{eqn59}
\end{equation}

with direct solvers. 

The continuation of solutions branches around turning points can be achieved by standard continuation techniques such as the pseudo-arc-length continuation \cite{keller1977numerical}. For example, given two already computed stable fixed points points $(\boldsymbol{x}^{(0)}, \boldsymbol{p}^{(0)})$ and $(\boldsymbol{x}^{(1)}, \boldsymbol{p}^{(1)})$, convergence on saddle fixed points can be achieved by one-dimensional parameter (say ${p}_i$, the i-th element of the $\boldsymbol{p}$) continuation past turning points. This  procedure involves the iterative solution of the following linearized system:

\begin{equation}
\begin{bmatrix}
\displaystyle
    I-\nabla \boldsymbol{F}_T(\boldsymbol{x}, \boldsymbol{p}) &  \nabla_{p_{i}} \boldsymbol{F}_T(\boldsymbol{x}, \boldsymbol{p})  \\
   \frac{(\boldsymbol{x}^{(1)} - \boldsymbol{x}^{(0)})'}{\Delta s}  &  \frac{p^{(1)}_{i} - p^{(0)}_i}{\Delta s}
\end{bmatrix}
\begin{bmatrix}
    dx   \\
    dp 
\end{bmatrix}
= -
\begin{bmatrix}
    \boldsymbol{F}_T(\boldsymbol{x}, \boldsymbol{p})   \\
    N(\boldsymbol{x}, \boldsymbol{p})
\end{bmatrix}
\label{eqn60}
\end{equation}

where

\begin{equation}
N(\boldsymbol{x}, \boldsymbol{p}) = \frac{(\boldsymbol{x}^{(1)} - \boldsymbol{x}^{(0)})'}{\Delta s} (\boldsymbol{x} - \boldsymbol{x}^{(1)}) + \frac{p^{(1)}_{i} - p^{(0)}_i}{\Delta s} (p_i- p^{(1)}_{i}) - \Delta s = 0
\label{eqn61}
\end{equation}

is the pseudo-arc-length condition; $\Delta s$ is the continuation step.  Eq.(~\ref{eqn61}) constrains the fixed point $(x^*, p^*)$ that is computed iteratively by Eq.(\ref{eqn60}) to lie on a hyperplane perpendicular to the tangent of the bifurcation diagram at $(\boldsymbol{x}^{(1)}, p^{(1)}_{i})$ at a distance $\Delta s$ from it.
For the above procedure to be accurate, one should perform the required computations when the system lies on the slow manifold. If the gap between the fast and slow time scales is very big then the time required for trajectories starting off the slow manifold to reach the slow manifold will be very small compared to $T$; hence the coarse-grained computations will not be affected for any practical means. Nevertheless, one can enhance the computing accuracy by forcing the system to start on the slow manifold (using for example the algorithms presented in \cite{Siettos_2011}, \cite{GearKevrekidis}, \cite{GearKaper}).\\

Returning back to the problem of numerical approximation of the stable manifold, as now there are no analytical expressions for the right-hand side of the evolution equations the condition for the derivation of the stable manifold: 

\begin{equation}
\begin{aligned}
\boldsymbol{h}_s(\boldsymbol{\Lambda}_s\boldsymbol{z}_{s}+\boldsymbol{g}_s(\begin{bmatrix} \boldsymbol{V}_1 & \boldsymbol{V}_2  \end{bmatrix}\begin{bmatrix} \boldsymbol{z_s} \boldsymbol{h}_s(\boldsymbol{z}_{s})\end{bmatrix},\boldsymbol{x}^*,\boldsymbol{p}^*))=\\
\boldsymbol{\Lambda}_u\boldsymbol{h}_s(\boldsymbol{z}_{s})+\boldsymbol{g}_u(\begin{bmatrix} \boldsymbol{V}_1 & \boldsymbol{V}_2  \end{bmatrix}\begin{bmatrix} \boldsymbol{z_s}\\\boldsymbol{h}_s(\boldsymbol{z}_{s})\end{bmatrix},\boldsymbol{x}^*,\boldsymbol{p}^*)) 
\end{aligned}
\label{eqn62}
\end{equation}

has to be solved numerically. In general, due to the nonlinear dependence of $\boldsymbol{g}_s$ and $\boldsymbol{g}_u$ on $\boldsymbol{z}_s$ (and $\boldsymbol{z}_u=\boldsymbol{h}(\boldsymbol{z}_s)$) the parameter estimations of the polynomial coefficients becomes a non-linear optimization problem. Thus one can try to find the coefficients $\boldsymbol{a}$ of the polynomial approximation of the stable $\boldsymbol{z}_u=\boldsymbol{h}(\boldsymbol{z}_s)$ by minimizing the nonlinear objective function with respect to the vector of the unknown polynomial coefficients, say $\boldsymbol{q}$:

\begin{equation}
J(\boldsymbol{q})=\arg_{\boldsymbol{q}} \min {\left\lVert \boldsymbol{r}(\boldsymbol{q})\right\rVert}_{2}^{2},
\label{eqn63}
\end{equation}

where,

\begin{equation}
\begin{aligned}
\boldsymbol{r}(\boldsymbol{q})=\boldsymbol{h}_s(\boldsymbol{\Lambda}_s\boldsymbol{z}_{s}+\boldsymbol{g}_s(\begin{bmatrix} \boldsymbol{V}_1 & \boldsymbol{V}_2  \end{bmatrix}\begin{bmatrix} \boldsymbol{z_s} \boldsymbol{h}_s(\boldsymbol{z}_{s})\end{bmatrix},\boldsymbol{x}^*,\boldsymbol{p}^*))-\\
\boldsymbol{\Lambda}_u\boldsymbol{h}_s(\boldsymbol{z}_{s})+\boldsymbol{g}_u(\begin{bmatrix} \boldsymbol{V}_1 & \boldsymbol{V}_2  \end{bmatrix}\begin{bmatrix} \boldsymbol{z_s}\\\boldsymbol{h}_s(\boldsymbol{z}_{s})\end{bmatrix},\boldsymbol{x}^*,\boldsymbol{p}^*)).
\label{eqn64}
\end{aligned}
\end{equation}

The above constitutes a non-linear least-squares problem which can be solved numerically through the concept of coarse-timestepper of the microscopic simulator  with an iterative algorithm such as  the Newton-Raphson algorithm as described in the following steps:

\begin{enumerate}

\item Construct the coarse-timestepper given by the map (\ref{eqn57}) using appropriate lifting $\boldsymbol{\mu}$ and restricting $\boldsymbol{M}$ operators of the microscopic evolved distributions.

\item ``Wrap" around the coarse-timestepper a continuation technique (e.g. the pseudo-arc-length continuation) to converge to a saddle fixed point ($\boldsymbol{x}^{*}, \boldsymbol{p}^{*}$).

\item Compute the coarse-grained Jacobian  $\nabla \boldsymbol{F}_T(\boldsymbol{x}^*, \boldsymbol{p}^*)$ and solve the eigenvalue problem $\nabla \boldsymbol{F}_T(\boldsymbol{x}^*, \boldsymbol{p}^*) \boldsymbol{V}=\boldsymbol{\Lambda}\boldsymbol{V}$. Find the $l$ stable and $n-l$ unstable eignemodes. Rearrange $\boldsymbol{V}$ as $\boldsymbol{V}=\begin{bmatrix}
    \boldsymbol{V}_1 & \boldsymbol{V}_2\end{bmatrix}$ with $\boldsymbol{V}_1$ being the $n \times l$ matrix whose columns are the eigevectors of the Jacobian that correspond to the $l$ eigenvalues lying inside the unit circle, $\boldsymbol{V}_2$ is a $n \times n-l$ matrix whose columns are the eigevectors of the Jacobian that correspond to the $n-l$ eigenvalues lying outside the unit circle.

\item Choose a certain set of polynomials as well as their maximum order $M$ for the numerical approximation of the $j$-th element, say $h_{js}$ of the stable manifold in the form of:

\begin{equation}
\begin{aligned}
z_{ju}(\boldsymbol{z}_s)\equiv h_{js}(\boldsymbol{z}_s)=\\ \sum_{k_1=0}^{M}\sum_{k_2=0}^{M} \dots \sum_{k_l=0}^{M} a_{k_1,k_2,\dots,k_l}^{(j)} P_{k_1}(z_{1s}) \dotsm  P_{k_l}(z_{ls}),\\
j=1,2,\dots n-l
\label{eqn65}
\end{aligned}
\end{equation}

where the variables $\boldsymbol{z}_s, \boldsymbol{z}_u$ are defined by the transformation

\begin{equation}
\begin{aligned}
\boldsymbol{x}'=\begin{bmatrix} \boldsymbol{V}_1 & \boldsymbol{V}_2  \end{bmatrix} \begin{bmatrix} \boldsymbol{z}_s \\ \boldsymbol{z}_u  \end{bmatrix},  \boldsymbol{x}'=\boldsymbol{x}-\boldsymbol{x}^*;  
\label{eqn66}
\end{aligned}
\end{equation}

\item Denote with $\boldsymbol{q}^{(j)}$ the vector with the unknown polynomial coefficients $a_{k_1,k_2,\dots,k_l}^{(j)}$, $j=1,2,\dots n-l$, $k_{1,2}=0,1,\dots M$ of the $j$-th element ($h_{js}$) of the stable manifold. Set an initial guess for $\boldsymbol{q}$.

\item Select $n_p$ points $\boldsymbol{x}_i=1,2,\dots n_p$ within a certain distance $B$ around ($\boldsymbol{x}^{*}, \boldsymbol{p}^{*}$) where an approximation of the stable manifold is sought, and at a certain distance from it, i.e. $ \epsilon_d<\left\lVert \boldsymbol{x}_i -\boldsymbol{x}^{*}\right\rVert <B$.

\item \textbf{Use the coarse-timestepper to construct the map:}

\begin{equation}
\boldsymbol{q}^{(j),(r+1)}=\boldsymbol{Q^{(j)}}(\boldsymbol{q}^{(j),(r)})
\label{eqn67}
\end{equation}

For each of the $\boldsymbol{x}_i=1,2,\dots n_p$:
\begin{itemize}
\item Set $k=0$. 
\item \textbf{For $k=0,1,2,\dots k_{max}$}
\begin{itemize}
    \item Use the transformation (~\ref{eqn66}) to find $\boldsymbol{z}_{s,k}$. Given $\boldsymbol{q}^{r}$, constrain $\boldsymbol{z}_{u,k}$ on the stable manifold, using  Eq.~(\ref{eqn65}).
    \item Use the transformation (~\ref{eqn66}) to find back $\boldsymbol{x}_k$ based on $\boldsymbol{z}_{s,k}, \boldsymbol{z}_{u,k}= (h{z}_{s,k})$.
    \item Use the coarse-timestepper  (~\ref{eqn57}) to find $\boldsymbol{x}_{k+1}$.
    \item Use the transformation (~\ref{eqn66}) to find $\boldsymbol{z}_{s,k+1}$.
    \item Use the truncated polynomial approximation given by Eq.(~\ref{eqn65}) evaluated at $\boldsymbol{q}^{r}$ to find $\boldsymbol{z}_{u,k+1}$
\end{itemize}
\item \textbf{End For}
\item $\forall\boldsymbol{x}_i=1,2,\dots n_p$, and $\forall \boldsymbol{z}_{s,k}, k=0,1,\dots k_{max}$ construct the matrix $\boldsymbol{A}$, whose columns contain the values of each one of the polynomials $P_{k_1}(z_{1s}) \dotsm  P_{k_l}(z_{ls})$, $k_1,k_2,\dots k_m =1,2,\dots M$.\\
For example if one chooses a power series expansion with  $l=2$ (i.e. $\boldsymbol{z}_s \in R^{2})$, the matrix $\boldsymbol{A}$ is of the following form:

\begin{equation}
\boldsymbol{A}=\begin{bmatrix}
   z^{1}_{s2,0} & {z^{1}_{s2,0}}^2 & {z^{1}_{s2,0}}^2 & z^{1}_{s1,0} & \dots & {z^{1}_{s1,0}}^2{z^{1}_{s2,0}}^2\\
   \\
    z^{1}_{s2,1} & {z^{1}_{s2,1}}^2 & {z^{1}_{s2,1}}^2 & z^{1}_{s1,1} & \dots & {z^{1}_{s1,1}}^2{z^{1}_{s2,1}}^2\\
    \vdots & \vdots & \vdots  & \vdots &\vdots &\vdots\\
   z^{1}_{s2,k_{max}} & {z^{1}_{s2,k_{max}}}^2 & {z^{1}_{s2,k_{max}}}^2 & z^{1}_{s1,k_{max}} & \dots & {z^{1}_{s1,k_{max}}}^2{z^{1}_{s2,k_{max}}}^2\\
    \\
    z^{2}_{s2,0} & {z^{2}_{s2,0}}^2 & {z^{2}_{s2,0}}^2 & z^{2}_{s1,0} & \dots & {z^{2}_{s1,0}}^2{z^{2}_{s2,0}}^2\\
    \vdots & \vdots & \vdots  & \vdots &\vdots &\vdots \\
    z^{2}_{s2,k_{max}} & {z^{2}_{s2,k_{max}}}^2 & {z^{2}_{s2,k_{max}}}^2 & z^{2}_{s1,k_{max}} & \dots & {z^{2}_{s1,k_{max}}}^2{z^{2}_{s2,k_{max}}}^2\\
    \vdots & \vdots & \vdots  & \vdots &\vdots &\vdots\\
   z^{np}_{s2,k_{max}} & {z^{np}_{s2,k_{max}}}^2 & {z^{np}_{s2,k_{max}}}^2 & z^{np}_{s1,k_{max}} & \dots & {z^{np}_{s1,k_{max}}}^2{z^{np}_{s2,k_{max}}}^2\\
\end{bmatrix}
\label{eqn68}
\end{equation}

where $z^{i}_{sj,k}$ denotes the $j$-th element of the vector $\boldsymbol{z}_s$ in the $k$-th time step resulting by the $i$-th $\boldsymbol{x}_i$ point. The number of the time steps $k_{max}$ has to be chosen so that the number of rows is greater than the number of columns (i.e. the number of polynomials that are used for the approximation).

\item Find $\boldsymbol{q}^{(r+1)}$ by solving the linear least squares problem

\begin{equation}
\arg\min_{\boldsymbol{q}^{(j),(r+1)}} \left\lVert \boldsymbol{A}\boldsymbol{q}^{(j),(r+1)}-\boldsymbol{b}\right\rVert 
\label{eqn69}
\end{equation}

where $\boldsymbol{b}=\begin{bmatrix}
     z^{(1)}_{uj,0} & z^{(1)}_{uj,1} &\dots z^{(1)}_{uj,k_{max}}  &\dots z^{(np)}_{uj,0} & z^{(np)}_{uj,1} &\dots z^{(np)}_{uj,k_{max}}
\end{bmatrix}'$

The optimal solution of the above linear least-squares problem is given by the solution of 
\begin{equation}
 \boldsymbol{A}'\boldsymbol{A}\boldsymbol{q}^{(j),(r+1)}=\boldsymbol{A}'\boldsymbol{b}
 \label{eqn70}
\end{equation}

If the matrix $\boldsymbol{A}'\boldsymbol{A}$ is of full rank, then the above system has a unique solution given by:

\begin{equation}
 \boldsymbol{q}^{(j),(r+1)}=(\boldsymbol{A}'\boldsymbol{A})^{-1}\boldsymbol{A}'\boldsymbol{b}
 \label{eqn71}
\end{equation}

Note that if the the initial points $\boldsymbol{x}_i$ are chosen close enough to the fixed point $\boldsymbol{x}^*$ and/or the number of time-steps $k_{max}$ are relatively large then as $\boldsymbol{z}_s \rightarrow 0$ the matrix $\boldsymbol{A}'\boldsymbol{A}$ will not be of full rank as the higher order terms of the polynomials expansion will approach fast zero. In that case one could use the Moore-Penrose pseudoinverse of $\boldsymbol{A}'\boldsymbol{A}$ to solve (~\ref{eqn70}) and the solution reads:

\begin{equation}
 \boldsymbol{q}^{{+}^{(j),(r+1)}}=A^{+}\boldsymbol{b}
 \label{eqn72}
\end{equation}

where the pseudo-inverse matrix $A^{+}$ that is obtain by the Singular Value Decomposition (SVD) of the matrix $\boldsymbol{A}$:

\begin{equation}
 \boldsymbol{A}^{+}=\boldsymbol{V}\boldsymbol{\Sigma}^{+}\boldsymbol{U}',
 \label{eqn73}
\end{equation}

where, $\Sigma^{+}$ is the inverse of sub-block diagonal matrix containing the non-zero singular values of the SVD decomposition of $\boldsymbol{A}$.

\end{itemize}

\item \textbf{Find the polynomials coefficients through a fixed-iteration algorithm (e.g. Newton-Raphson) around the map given by Eq.~\ref{eqn50}} as constructed in step 7.

\begin{itemize}

\item Set convergence tolerance, $tol$ for the approximation of the polynomial coefficients. Set $r=0$ and define $\boldsymbol{dq}^{(r)}=\left\lVert \boldsymbol{q}^{(j),(r+1)} -\boldsymbol{q}^{(j),(r)}\right\rVert$

\item \textbf{Do\ while $d^{(r)}$>$tol$}
\begin{itemize}
    \item Compute $\boldsymbol{q}^{(j),(r)}$
    \item Use the coarse-timestepper as in \emph{Step 7} to compute $\boldsymbol{q}^{(j),(r+1)}=\boldsymbol{Q}^{(j)}(\boldsymbol{q}^{(j),(r)})$
    \item Set $\boldsymbol{f}^{(r)}=\boldsymbol{q}^{(j),(r)}-\boldsymbol{q}^{(j),(r+1)}$
    \item Compute the Jacobian $\nabla \boldsymbol{Q}^{(j)}(\boldsymbol{q}^{(j),(r)})$ by perturbing appropriately $\boldsymbol{q}^{(j),(r)}$
    \item Solve the system 
    
    \begin{equation}
       \begin{bmatrix}
           \boldsymbol{I} -\nabla \boldsymbol{Q}^{(j)}(\boldsymbol{q}^{(j),(r)})\end{bmatrix}\boldsymbol{dq}^{(r)}=-\boldsymbol{f}^{(r)}
 \label{eqn74}
   \end{equation}
   
 to get $\boldsymbol{dq}^{(r)}$
  \item Update the solution: set $r=r+1$ and compute the new estimation for the polynomial coefficients:
  
  \begin{equation}
  \boldsymbol{q}^{(j),(r)}=\boldsymbol{q}^{(j),(r-1)}+\boldsymbol{dq}^{(r)}
   \label{eqn75}
\end{equation}
    
\end{itemize}
\item \textbf{End Do\ while}
\end{itemize}

\end{enumerate}

\section{The Illustrative Examples: Numerical Results}
\label{sec:NumericalResults}

The proposed approach is illustrated through two examples: (a) the toy model ~(\ref{eqn39}) and (b) a Monte Carlo simulation of a catalytic reaction on a lattice, for which we have also derived analytically an approximation of the stable and unstable manifolds based on the mean field model. 
 
\subsection{The Toy Model}
\label{subsec:toy}

 In Proposition 1, we showed that the stable manifold of the discrete time model given by Eq.~\ref{eqn39} is given by 

\begin{equation}
\begin{aligned}
h_s(x_1,x_2)\approx-\frac{4}{7}x_{2}^2+\frac{32}{119}x_{1}^2x_{2}
\end{aligned}
    \label{eqn76}
\end{equation}

Here, we will derive a numerical approximation of the stable manifold by assuming that the equations of the model are not explicitly known. Our assumption is that we have a black-box model that given initial conditions $(\boldsymbol{x}_1(0), \boldsymbol{x}_2(0), \boldsymbol{x}_3(0))$ it outputs $(\boldsymbol{x}_1(k), \boldsymbol{x}_2(k), \boldsymbol{x}_3(k))$, 
$\{k=1,2,\dots\}$.
The saddle point is the $(\boldsymbol{x}_1\boldsymbol{x}_2, \boldsymbol{x}_3)=(0,0,0)$. The Jacobian on the saddle is approximated by central finite differences with $\epsilon=0.01$ as perturbation on the initial conditions and running the simulator for one step $k=1$. By doing so, the numerical approximation of the Jacobian actually coincides for any practical means with the analytical one. The eigenvalues are $\lambda_1=-0.5$, $\lambda_2=-0.5$, $\lambda_3=2$ and the eigenvectors are given by $\boldsymbol{e}_i$, i.e. the unit vectors with one at the $i-th$ component and zero in all other components. From the above, it is clear that $z_{1s}=x_1$, $z_{2s}=x_2$, $z_{1u}=x_3$. Thus, we chose a power series expansion of the manifold around the saddle as 

\begin{equation}
\begin{aligned}
   x_3=h_s(x_1,x_2)\approx
   a_{0,1}x_{2}+
a_{0,2}x_{2}^2+
a_{1,0}x_{1}+
a_{1,1}x_{1}x_{2}+
a_{1,2}x_{1}x_{2}^2+\\
a_{2,0}x_{1}^2+
a_{2,1}x_{1}^2x_{2}+
a_{2,2}x_{1}^2x_{2}^2
    \label{eqn77}
    \end{aligned}
\end{equation}

For the construction of the map (see Eq.~\ref{eqn67}) we have used the following parameters: $kmax=3$, $np=4$, $\boldsymbol{z}_{s1}=(-0.2,-0.2)$, $\boldsymbol{z}_{s2}=(-0.2,0.2)$, $\boldsymbol{z}_{s3}=(0.2,-0.2)$, $\boldsymbol{z}_{s4}=(0.2,0.2)$, and central finite differences with $\epsilon=0.05$, for the numerical approximation of the Jacobian $\nabla \boldsymbol{Q}$ that is required for the Newton-Raphson iterations; the tolerance was set to $tol=1E-04$, and the initial guess of the power expansion coefficients was set as $\boldsymbol{q}^{0} \equiv$
($a_{0,1}$,$a_{0,2}$,$a_{1,0}$,$a_{1,1}$,$a_{1,2}$,$a_{2,0}$,$a_{2,1}$,$a_{2,2}$)=( 0.1,0.25,-0.3,-0.15,-0.15,0.1,-0.1,0.15).\par
The Newton-Raphson iterations are as follows:

\begin{table}[h!]
    \centering
\begin{tabular}{c c c c c c c c c }
error &$a_{0,1}$&$a_{0,2}$&$a_{1,0}$&$a_{1,1}$&$a_{1,2}$&$a_{2,0}$&$a_{2,1}$&$a_{2,2}$\\
\hline
$1.026$& $1.41E^{-06}$ &$-0.571$&$-9.06E^{-13}$&$3.91E^{-13}$&$1.13E^{-10}$& $-6E^{-04}$&$0.268$&$-0.26$\\
$6.49E^{-09}$&$1.41E^{-06}$&$-0.571$&$-4.77E^{-15}$&$2.12E^{-15}$&$5.93E^{-13}$&$-6E^{-04}$&$0.268$&$-0.26$\\
\hline
\end{tabular}
\caption{Newton-Raphson iterations for the numerical approximation of the stable manifold of the black-box simulator whose model is given by Eq.~\ref{eqn39}. The initial guess of the power expansion coefficients was set as 
($a_{0,1}$,$a_{0,2}$,$a_{1,0}$,$a_{1,1}$,$a_{1,2}$,$a_{2,0}$,$a_{2,1}$,$a_{2,2}$)=(0.1,0.25,-0.3,-0.15,-0.15,0.1,-0.1,0.15)
}.
\label{Table1}
\end{table}

Thus, the numerical approximation of the stable manifold as derived by the proposed numerical algorithm reads:

\begin{equation}
h_s(x_1,x_2)\approx-0.5708x_{2}^2+0.2687x_{1}^2x_{2}-0.2598x_{1}^2x_{2}^2.
\label{eqn78}
\end{equation}

A comparison with the analytical approximation above shows that the approximation error for $a_{0,2}$ is about $6E^{-4}$ and for $a_{2,1}$ is about $1E^{-4}$. The numerical scheme outputs also a non-zero coefficient for $a_{2,2}$ which is not present in the analytical approximation. This is due to the truncation of the power expansion to second order terms: when equating the terms on both sides of Eq.~\ref{eqn46} higher order powers than three are set to zero. One can confirm the contribution of this extra term found by the numerical scheme by simple simulations.
For example, by setting as initial conditions $x_1(0)=0.2$, $x_2(0)=0.2$ and $x_3(0)=-\frac{4}{7}x_{2}^2+\frac{32}{119}x_{1}^2x_{2}$ we get the results shown in Table~\ref{Table2}.

\begin{table}[h!]
    \centering
\begin{tabular}{c|c c c c c c} 
$k$&0&1&2&3&4&5\\
\hline
$x_1(k)$ &$0.2$ & $-0.1$&$0.05$ &$-0.025$& $0.0125$&$-0.00625$ \\
$x_2(k)$&$0.2$&$-0.06$ &$0.04$&$-0.0175$&$0.00938$&$-0.00453$\\
$x_3(k)$&$-0.020705$&$-0.00141$&$-7.76E^{-04}$& $0.003152$&$0.006612$&$0.01331$\\
\hline
\end{tabular}
\\
  \caption{Numerical simulation of the model given by Eq.~\ref{eqn39} setting as initial conditions on the manifold approximated by  $x_3(0)=-\frac{4}{7}x_{2}^2+\frac{32}{119}x_{1}^2x_{2}$; the other initial conditions were set to $x_1(0)=0.2$, $x_2(0)=0.2$}
    \label{Table2}
\end{table}

Note that $x_3(k)$ goes to zero and then after $k=3$ it diverges due to the (truncated) approximation of the manifold. \\ If we add the extra term found with the numerical scheme, and start with the same initial conditions for $x_1(0)$ and $x_2(0)$, but with $x_3(0)=-\frac{4}{7}x_{2}^2+\frac{32}{119}x_{1}^2x_{2}-0.2598x_{1}^2x_{2}^2$ we get the results shown in Table~\ref{Table3}.

\begin{table}[h!]
    \centering
\begin{tabular}{c|c c c c c c} 
$k$&0&1&2&3&4&5\\
\hline
$x_1(k)$ &$0.2$ & $-0.1$&$0.05$ &$-0.025$& $0.0125$&$-0.00625$ \\
$x_2(k)$&$0.2$&$-0.06$ &$0.04$&$-0.0175$&$0.00937$&$-0.00453$\\
$x_3(k)$&$-0.02112$&$-2.243E^{-03}$&$-8.86E^{-04}$& $-1.72E^{-04}$&$-3.87E^{-05}$&$1.0395E^{-05}$\\
\hline
\end{tabular}
\\
\caption{Numerical simulation of the model given by Eq.~\ref{eqn39} setting as initial conditions on the manifold approximated by  $x_3(0)=-\frac{4}{7}x_{2}^2+\frac{32}{119}x_{1}^2x_{2}-0.2598x_{1}^2x_{2}^2$; the other initial conditions were set to $x_1(0)=0.2$, $x_2(0)=0.2$}
    \label{Table3}
\end{table}

\subsection{Kinetic Monte Carlo Simulation of CO oxidation on a Catalyst}
\label{subsec:MonteCarlo}

  The proposed approach is illustrated through a kMC microscopic
model \cite{Makeev2002} describing the dynamics of CO oxidation on a catalyst. The species react, are adsorbed or desorbed on a finite
lattice with periodic boundary conditions. At each time instant, the sites of the lattice are considered to be either vacant or occupied by the reaction species. The system dynamics are described by the following chemical master equation:

\begin{equation}
\frac{dP(x,t)}{dt}=\sum\limits_{y\neq x} Q(x,y)P(y,t)-\sum\limits_y Q(x,y)P(x,t),
\label{eqn79}
\end{equation}

where $P(x,t)$ is the probability that the system will be in state $x$ at time $t$ and $Q(y,y)$ is the probability for the transition from state $y$ to $x$ per unit time.
The summation runs over all possible transitions (reactions). Here, the numerical simulation of the above stochastic equation was realized using the Gillespie kMC algorithm \cite{Gillespie1976,Gillespie1977}. The reaction mechanism can be schematically described by the following elementary steps:\\
\\
(1) $CO_{gas} +*_i \leftrightarrow CO_{ads,i}$\\
(2) $O_{2,gas}+*_i+*_j \leftrightarrow O_{ads,i} +O_{ads,j}$\\
(3) $CO_{ads,i}+O_{ads,j} \rightarrow CO_{2,gas}+*_i+*_j$\\
\\
where $i$, $j$ are sites on the square lattice, $*$ denotes a site with a vacant adsorption site,
while ``ads" denotes adsorbed particles. By Adding an inert site-blocking adsorbate with a reversible adsorption step the mean field approximation can be derived by the master equation (Eq.~\ref{eqn79}) and is given by the following system of ordinary differential equations \cite{Makeev2002}:

\begin{equation}
\begin{split}
\frac{d \theta_A}{dt}= \alpha(1-\theta_A-\theta_B-\theta_C)-\gamma \theta_A-4k_r \theta_A \theta_B \\
\frac{d \theta_B}{dt}= 2 \beta(1-\theta_A-\theta_B-\theta_C)^2-4k_r \theta_A \theta_B \\
\frac{d\theta_C}{dt}=\mu (1-\theta_A-\theta_B-\theta_C)-\eta \theta_C,
\end{split}
\label{eqn80}
\end{equation}

where $\theta_i$ represent the coverages of species ($i=A,B,C$,
resp. $CO$, $O$ and inert species $C$) on the catalytic surface;
$\mu$ denotes C adsorption and $\eta$ C desorption rate. For $\alpha=1.6$, $\gamma=0.04$, $k_r=1$ , $\eta=0.016$, $\mu=0.36$ and
treating $\beta$ as the bifurcation parameter the mean field model (22) exhibits two Andronov-Hopf points at
$(\theta_{A}^*,\theta_{B}^*,\theta_{C}^*,\beta^*)_1\approx (0.3400,
0.0219, 0.6108, 20.2394)$ and
$(\theta_{A}^*,\theta_{B}^*,\theta_{C}^*,\beta^*)_2 \approx
(0.1895, 0.0575, 0.7207, 21.2779)$. Between the two Andronov-Hopf points, the equilibria are saddles.\\

For the kMC simulations the number of the sites (system size) and the number of-consistent to the mean values of the distribution on the lattice-realizations were chosen to be $N_{size}=800 \times 800$ and $N_{r}=2000$, respectively. The value of the time horizon was selected as $T=0.05$ . The coarse-timestepper of the kMC realizations were used as black box coarse timesteppers. The coarse-grained bifurcation diagram was obtained by applied the Equation-free approach  upon convergence of the Newton-Raphson to a residual of $O(10^{-3})$ for $\epsilon \approx 10^{-2}$. We have chosen this model as for big enough lattice-realizations and runs, the coarse-grained bifurcation diagram and stability practically coincides with the one obtained from the mean filed model; thus one can perform a direct comparison of the numerical approximation of the stable manifold obtained with the kMC simulator and the one derived analytically from the mean field model.\\
Here, we have chosen to find the stable and unstable manifolds at $\beta =20.7$. For this value of the bifurcation parameter, the coarse-grained fixed point is $(\theta_{A}^*,\theta_{B}^*,\theta_{C}^*\approx
(0.2924, 0.0294, 0.6492)$ and the corresponding coarse-grained Jacobian is $~$
$\begin{bmatrix} 0.9246&-0.1209&-0.0683\\
-0.0109&0.8450&-0.0104\\
-0.0156&-0.0151&0.9832
\end{bmatrix}$ (compare this with the one that is obtained from the T-map of the mean field model:  $\begin{bmatrix} 0.9244&-0.1202&-0.0684\\
-0.0109&0.8466&-0.0138\\
-0.0161&-0.0151&0.9830
\end{bmatrix}$). The coarse-grained eigenvalues and corresponding eigenvectors are:\\
$\lambda_1 \approx 0.7515$, $\boldsymbol{v}_1=\left(\begin{array}{c}
-0.5959\\-0.7975\\-0.0942 \end{array} \right)$, $\lambda_{2,3} \approx 1.0006 \pm 0.013i$ , $\boldsymbol{v}_{2,3}= \left(\begin{array}{c} -0.7967\\0.1826 \pm 0.0633i\\ 0.5645 \mp - 0.0967i \end{array} \right)$.\par

\subsubsection{Numerical Parametrization of the Stable Manifold}
A third-order approximation of the stable manifold is given by the following relations:
 
 \begin{equation}
\boldsymbol{h_s(z_s)}\approx \begin{bmatrix}
    a_{1}^{(1)}z_{s}+a_{2}^{(1)}z_{s}^2+a_{3}^{(1)}z_{s}^3\\
    a_{1}^{(2)}z_{s}+a_{2}^{(2)}z_{s}^2+a_{3}^{(2)}z_{s}^3
    \end{bmatrix}
    \label{eqn81}
\end{equation}

 For the numerical approximation of the stable manifold, we have chosen $n_p=6$ points around the coarse-grained saddle. In particular, we have set the following initial values for the $z_s: \{-0.005, -0.003, -0.001, 0.001, 0.003, 0.005\}$ (note that if $z_s>0.03$ the transformation $\boldsymbol{x}'=V\boldsymbol{z}$ results to negative values of $x(2)$); we have also set $k_{max}=2$. By applying the proposed numerical method, the stable manifold is approximated by the following relations:

\begin{equation}
\boldsymbol{h}_s(z_s)\approx \begin{bmatrix}
    -0.0155z_{s}-4.5964z_{s}^2+42.9421z_{s}^3\\
    -0.0797z_{s}-29.0291z_{s}^2+270.0737z_{s}^3
\end{bmatrix}
    \label{eqn82}
\end{equation}

For comparison purposes we also computed the corresponding stable manifold for the mean-field model (~\ref{eqn80}). Following the approach described in section 2, one obtains a nonlinear system of six algebraic equations (see Appendix) which was solved for the unknown coefficients with Newton-Raphson; again the convergence tolerance was of the order of $10^{-3}$ while the perturbation for computing the jacobian matrices was of the order of $10^{-2}$. In this case, the expression for the approximation of the manifold reads:

\begin{equation}
\boldsymbol{h}_s(z_s)\approx \begin{bmatrix}
    -4.6775z_{s}^2+43.2058z_{s}^3\\
   -29.0746z_{s}^2+270.8824z_{s}^3
\end{bmatrix}
    \label{eqn83}
\end{equation}

By comparing the expressions (\ref{eqn82}) \& (\ref{eqn83}), we see that the numerical approximation of the coarse-grained manifold of the kMC simulator is in a good agreement with the  one obtained by the mean field model. \\
One can verify that this is a good approximation of the coarse-grained stable manifold around the coarse-saddle by performing temporal simulations. In table 4 are given various instances of the temporal simulation of the mean field model with initial conditions constrained on the approximation of the mean-field manifold given by Eq.(\ref{eqn80}).

\begin{table}[h!]
    \centering
\begin{tabular}{c|c c c c c c c c c} 
$time$&$0$&$0.25$&$0.5$&$0.75$&$1$&$1.25$&$1.5$&$1.75$&$2$\\
\hline
$\theta_{A}(t)$ &$0.3102$ & $0.2970$&$0.2935$ &$0.2925$& $0.2923$&$0.2922$&$0.2922$&$0.2922$&$0.2922$ \\
$\theta_{B}(t)$&$0.0443$&$0.0357$& $0.0312$&$0.0300$&$0.0297$&$0.0296$&$0.0296$&$0.0296$&$0.0296$\\
$\theta_{C}(t)$&$0.6514$&$0.6499$&$0.6493$& $0.6492$&$0.6492$&$0.6492$&$0.6492$&$0.6492$&$0.6492$\\
\hline
\end{tabular}
\\
\caption{Numerical simulation of the mean field model (~\ref{eqn80}) by constraining the initial conditions on the manifold approximated by Eq.\ref{eqn83}}
    \label{Table4}
\end{table}

In table 5 is shown the numerical simulation of the kMC simulator. The initial conditions were created by lifting the concentrations of the reactants to the coarse-grained manifold given by Eq.\ref{eqn82}.\\

\begin{table}[h!]
    \centering
\begin{tabular}{c|c c c c c c c c c} 
$time$&$0$&$0.25$&$0.5$&$0.75$&$1$&$1.25$&$1.5$&$1.75$&$2$\\
\hline
$\theta_{A}(t)$ &$0.3098$ & $0.2976$&$0.2937$ &$0.2928$& $0.2925$&$0.2925$&$0.2924$&$0.2924$&$0.2924$ \\
$\theta_{B}(t)$&$0.0475$&$0.0359$& $0.0313$&$0.0301$&$0.0297$&$0.0297$&$0.0296$&$0.0296$&$0.0296$\\
$\theta_{C}(t)$&$0.6517$&$0.6496$&$0.6491$& $0.6489$&$0.6489$&$0.6489$&$0.6489$&$0.6489$&$0.6489$\\
\hline
\end{tabular}
\\
\caption{Numerical simulation of the kMC simulator by first constraining the coarse-grained initial conditions on the manifold approximated by Eq.\ref{eqn63} and lifting to appropriate reactant concentrations on the lattice.}
    \label{Table5}
\end{table}

\subsubsection{Numerical Parametrization of the Unstable Manifold}

We seek for the following parametrization of the unstable manifold

 \begin{equation}
 \begin{aligned}
h_u(\boldsymbol{z}_u)\approx
    a_{1,0}z_{u1}+a_{2,0}z_{u1}^2+a_{0,1}z_{u2}+a_{0,2}z_{u2}^2+a_{1,1}z_{u1}z_{u2}+a_{1,2}z_{u1}z_{u2}^2+a_{2,1}z_{u1}^2z_{u2}
    \label{eqn84}
\end{aligned}
\end{equation}

 For the numerical approximation of the unstable manifold, we have chosen again $n_p=6$ points around the coarse-grained saddle. In particular, we have set the following initial values for the $z_{u1}$,$z_{u2}$: $\{-0.05, -0.03, -0.01 0.01 0.03, 0.05\}$ and set $k_{max}=2$. By applying the proposed numerical method, the unstable manifold is approximated as:

\begin{equation}
h_u(\boldsymbol{z}_u)\approx
    -0.1543z_{u1}^2-0.0084z_{u2}^2-0.0817z_{u1}z_{u2}+0.0581z_{u1}z_{u2}^2+0.1596z_{u1}^2z_{u2}
    \label{eqn85}
\end{equation}

For comparison purposes, we also computed the corresponding unstable manifold for the mean-field model (~\ref{eqn80}). Following the approach described in Appendix, we obtained analytically seven algebraic equations (see Appendix) which were solved for the unknown coefficients with Newton-Raphson; the convergence tolerance was of the order of $10^{-6}$ while the perturbation for computing the Jacobian matrices was of the order of $10^{-2}$. In this case, the parametrization of the unstable manifold reads:

\begin{equation}
h_u(\boldsymbol{z}_u)\approx
    -0.1521z_{u1}^2-0.0079z_{u2}^2-0.0747z_{u1}z_{u2}+0.0595z_{u1}z_{u2}^2+0.1419z_{u1}^2z_{u2}
    \label{eqn86}
\end{equation}

By comparing the expressions (\ref{eqn85}) \& (\ref{eqn86}), we see that the numerical approximation of the coarse-grained manifold of the kMC simulator is in a fair agreement with the one obtained by the mean field model. \\

\section{Conlcusions}
\label{sec:Conclusions}
We propose a numerical method for the parametrization of the semi-local coarse-grained stable and unstable manifolds of saddle/stationary points of microscopic simulators when macroscopic models in a closed form in the form of ODEs are not explicitly available. The methodology is based on the Equation-free multiscale framework. The numerical methodology estimates the coefficients of a polynomial expansion of the invariant manifolds by a nonlinear least squares algorithm. The proposed numerical Equation-Free algorithm consists of three steps: (a) detection of the
coarse-grained saddle by constructing the coarse-timestepper of the microscopic dynamics, (b) estimation of the coarse-grained Jacobian and evaluation of its eigenvalues and eigenvectors, and (c) estimation  of the coefficients of the polynomial approximation of the invariant manifolds. The later step involves the construction of a map for the coefficients of the polynomial expansion of the manifold. The key assumption of the methodology is that a macroscopic model in the form of ODEs can in principle describe the emerging macroscopic dynamics but it is not available in a closed form. This assumption implies that there is time-scale separation between the higher-order and lower-order moments of the
evolving microscopic distribution. The proposed numerical approach was illustrated through two examples, a toy model treated as a black-box time-stepper and a kinetic Monte Carlo simulator of a simple catalytic reaction. For the kMC simulator a mean field model in the form of ODEs was also given. For both models, we have also derived analytically a parametrization of the invariant manifolds for comparison purposes.  As we show, the proposed numerical method approximates fairly well the parametrization obtained analytically taking the vector fields as known. \\
The proposed numerical method estimates a parametrization of the stable and unstable manifolds in a neighborhood of the coarse-grained saddle. In a future work, we aim at extending the proposed numerical method to perform a piece-wise parametrization of the global manifold. This could be done for example by coupling the proposed algorithm with an arc-length continuation of the polynomial coefficients as we move far from the equilibrium. 
Another point that requires further investigation in a future work is the analysis of the convergence properties of the algorithm. There are several numerical issues that should be studied such as the convergence properties of the scheme with respect to the amplitude of stochasticity, the sensitivity  of the parametrization with respect to the discretization of the domain around the saddle as well as the issue of finding confidence intervals for the coefficients of the polynomial expansion. 

\subsection*{Author contributions}
C.S. conceived the idea, developed the numerical methodology, performed the numerical analysis and wrote the original manuscript. L.R. verified the mumerical and analytical methods, suggested the examples and performed the numerical simulations. Both authors discussed the results and contributed to the final manuscript

\subsection*{Financial disclosure}

None reported.

\subsection*{Conflict of interest}

The authors declare no potential conflict of interests.

\appendix
\section{Extraction of the Stable and Unstable Manifolds for the Mean Field model of ODEs}
\label{sec:appendixa}

Let us assume a continuous model in the following form of ODEs:

\begin{equation}
\frac{d \boldsymbol{x}}{dt}=\boldsymbol{f}(\boldsymbol{x},\boldsymbol{p}), \boldsymbol{f}:R^n \times R^m \rightarrow R^n 
\label{eqn1ap}
\end{equation}

where $\boldsymbol{f}$  is considered to be sufficiently smooth.

To determine the semi-local stable and unstable manifold of a saddle fixed point $(\boldsymbol{x}^*,\boldsymbol{p}^*)$, the following linear transformation is introduced:

\begin{equation}
\hat{\boldsymbol{x}} \equiv (\boldsymbol{x}-\boldsymbol{x}^*)=\boldsymbol{V}\boldsymbol{z} 
\label{eqn2ap}
\end{equation}

where $\boldsymbol{V}$ is the matrix with columns the  eigenvectors $\boldsymbol{v}_j$ of the Jacobian $\nabla_{x}\boldsymbol{f}(\boldsymbol{x},\boldsymbol{p})$  computed at $(\boldsymbol{x}^*,\boldsymbol{p}^*)$. As in section (2) expanding the right-hand side of  Eq.(\ref{eqn1ap})  around $(\boldsymbol{x}^*,\boldsymbol{p}^*)$ and introducing Eq.(\ref{eqn2ap}) we get:

\begin{equation}
\frac{d\boldsymbol{z}}{dt}=\boldsymbol{V}^{-1}\nabla_{\boldsymbol{x}}\boldsymbol{f}(\boldsymbol{x},\boldsymbol{p})\boldsymbol{V}\boldsymbol{z}+\boldsymbol{V}^{-1}\boldsymbol{g}(\boldsymbol{Vz},\boldsymbol{p}) 
\label{eqn3ap}
\end{equation}

$\boldsymbol{g}(\boldsymbol{Vz},\boldsymbol{p})$ contains the higher order terms with respect to $\boldsymbol{x}$.

By rearranging appropriately the columns of $\boldsymbol{V}$, the Jacobian  $\boldsymbol{J} \equiv \boldsymbol{V}^{-1}\nabla_{\boldsymbol{x}}\boldsymbol{f}(\boldsymbol{x},\boldsymbol{p})\boldsymbol{V}$ can be written in a block form as $\boldsymbol{J}= \left(\begin{array}{cc} \boldsymbol{\Lambda}_s & \boldsymbol{0}\\
\boldsymbol{0} & \boldsymbol{\Lambda}_u \end{array} \right)$, where $\boldsymbol{\Lambda}_s$ is the  $l\times l$ (diagonal/block diagonal) matrix whose eigenvalues are the $l$ eigenvalues with negative real parts and $\boldsymbol{\Lambda}_u$ is the  $n-l\times n-l$ (diagonal/block diagonal) matrix whose eigenvalues are the $n-l$ eigenvalues with positive real parts.Thus, the system given by Eq.(\ref{eqn3ap}) can be written as:

\begin{equation}
\begin{aligned}
\frac{d\boldsymbol{z}_s}{dt}=\boldsymbol{\Lambda}_s\boldsymbol{z}_s+\boldsymbol{g}_s(\boldsymbol{Vz},\boldsymbol{p}) \\
\frac{d\boldsymbol{z}_u}{dt}=\boldsymbol{\Lambda}_u\boldsymbol{z}_u+\boldsymbol{g}_u(\boldsymbol{Vz},\boldsymbol{p})
\end{aligned}
\label{eqn4ap}
\end{equation}

where,

\begin{equation}
\begin{bmatrix} \boldsymbol{g}_s \\ \boldsymbol{g}_u  \end{bmatrix}=\begin{bmatrix} \boldsymbol{V}_1 & \boldsymbol{V}_2  \end{bmatrix}^{-1}\boldsymbol{g}(\boldsymbol{Vz},\boldsymbol{p})
\label{eqn5ap}
\end{equation}

$\boldsymbol{V}_1$ and $\boldsymbol{V}_2$ are the sub-matrices of dimensions $n \times l$ and $n \times n-l$, whose columns contain the eigenvectors corresponding to the eigenvalues with negative and positive real parts, respectively.

A stable manifold  is given by the following equation:
\begin{equation}
\boldsymbol{z}_u = \boldsymbol{h}_s(\boldsymbol{z}_s)
\label{eqn6ap}
\end{equation}

while an unstable manifold  is given by the following equation:
\begin{equation}
\boldsymbol{z}_s = \boldsymbol{h}_u(\boldsymbol{z}_u)
\label{eqn7ap}
\end{equation}

The dynamics on the stable manifold can can be computed by differentiating Eq.(\ref{eqn6ap}) with respect to time to get:

\begin{equation}
\frac{d\boldsymbol{z}_u}{dt} = \nabla_{\boldsymbol{z}_s}\boldsymbol{h}(\boldsymbol{z}_s)\frac{d\boldsymbol{z}_s}{dt}
\label{eqn8ap}
\end{equation}

Given Eq.(~\ref{eqn4ap}), Eq.(\ref{eqn8ap}) becomes:

\begin{equation}
\boldsymbol{\Lambda}_u\boldsymbol{h}(\boldsymbol{z}_s)+\boldsymbol{g}_u(\boldsymbol{z}_s,\boldsymbol{h}_s(\boldsymbol{z}_s),\boldsymbol{p})=\nabla_{\boldsymbol{z}_s}\boldsymbol{h}_s(\boldsymbol{z}_s)[\boldsymbol{\Lambda}_s\boldsymbol{z}_s+\boldsymbol{g}_s(\boldsymbol{z}_s,\boldsymbol{h}_s(\boldsymbol{z}_s),\boldsymbol{p})]
\label{eqn9ap}
\end{equation}
\\
Accordingly, the dynamics on the unstable manifold can can be computed by differentiating Eq.\ref{eqn7ap} with respect ot time to get:

\begin{equation}
\frac{d\boldsymbol{z}_s}{dt} = \nabla_{\boldsymbol{z}_u}\boldsymbol{h}_u(\boldsymbol{z}_u)\frac{d\boldsymbol{z}_u}{dt}
\label{eqn10ap}
\end{equation}

Given Eq.(~\ref{eqn4ap}), Eq.(\ref{eqn10ap}) becomes:

\begin{equation}
\boldsymbol{\Lambda}_s\boldsymbol{h}_u(\boldsymbol{z}_u)+\boldsymbol{g}_s(\boldsymbol{h}_u(\boldsymbol{z}_u),\boldsymbol{z}_u,\boldsymbol{p})=\nabla_{\boldsymbol{z}_u}\boldsymbol{h}_u(\boldsymbol{z}_u)[\boldsymbol{\Lambda}_u\boldsymbol{z}_u+\boldsymbol{g}_u(\boldsymbol{h}_u(\boldsymbol{z}_u),\boldsymbol{z}_u,\boldsymbol{p})]
\label{eqn11ap}
\end{equation}

As described in section 2, the stable and unstable manifolds can be approximated by polynomials, and the coefficients of the terms of the same order in both sides of Eqs.(\ref{eqn4ap}), (\ref{eqn8ap}) are equated. This leads to a system of (nonlinear) algebraic equations (homological equations) to be solved for the unknown polynomial coefficients.\\

As described in section 4.2 we aim at computing the stable and unstable manifolds of the mean field model of CO oxidation given by Eq.(\ref{eqn80}) at $\beta =20.7$. The fixed point is $(\theta_{A}^*,\theta_{B}^*,\theta_{C}^*\approx
(0.2924, 0.0294, 0.6492)$ and the corresponding Jacobian of the right-hand-side of Eq.(\ref{eqn80})  is 

\begin{equation}
\boldsymbol{J}(\theta_{A}^*,\theta_{B}^*,\theta_{C}^*)=
\begin{bmatrix} -1.7578&-2.7698&-1.60\\
-2.5069&-3.5589&-2.3891\\
-0.360&-0.360&-0.376
\end{bmatrix})
\label{eqn12ap}
\end{equation}

The eigenvalues and corresponding eigenvectors of $\boldsymbol{J}(\theta_{A}^*,\theta_{B}^*,\theta_{C}^*)$ are:\\
$\lambda_1 \approx -5.7148$, $\boldsymbol{v}_1=\left(\begin{array}{c}
-0.5961\\-0.7973\\-0.0939 \end{array} \right)$, $\lambda_{2,3} \approx 0.0110 \pm 0.0300i$, $\boldsymbol{v}_{2,3}= \left(\begin{array}{c} -0.7964\\0.1851\pm 0.0729i\\ 0.5600 \mp0.1112i \end{array} \right)$.

\subsection{Parametrization of the Stable Manifold of the Mean Field Model of CO Oxidation on Catalytic Surfaces}

For the mean field model of CO oxidation given by (\ref{eqn80}), we used a third-order approximation of the stable manifold  around $(\theta_{A}^*,\theta_{B}^*,\theta_{C}^*)$ given by:

\begin{equation}
\boldsymbol{h_s(z_s)}\approx \begin{bmatrix}
    a_{1}^{(1)}z_{s}+a_{2}^{(1)}z_{s}^2+a_{3}^{(1)}z_{s}^3\\
    a_{1}^{(2)}z_{s}+a_{2}^{(2)}z_{s}^2+a_{3}^{(2)}z_{s}^3
\end{bmatrix}
    \label{eqn13ap}
\end{equation}

 Introducing Eq.(\ref{eqn13ap}) into Eq.(\ref{eqn11ap})  and equating the terms up to third order of both sides, we get the following set of six nonlinear equations:

\begin{equation}
0.011005a_{1}^{(1)} + 0.030017 a_{1}^{(2)}=-5.7148 a_{1}^{(1)}\\
\label{eqn14ap}
\end{equation}

\begin{equation}
\begin{split}
    (2a_{2}^{(1)}(- 5.7148)- a_{1}^{(1)}(0.8705{a_{1}^{(1)}}^2 + 0.43711a_{1}^{(1)}a_{1}^{(2)} + 2.1719a_{1}^{(1)} + 
    0.047872{{a_{1}^{(2)}}}^2\\
    + 3.9445a_{1}^{(2)}+ 69.473)=(-0.088879{a_{1}^{(1)}}^2 + 0.0356a_{1}^{(1)}a_{1}^{(2)} + 4.2609a_{1}^{(1)} + 0.035831{{a_{1}^{(2)}}}^2 \\+ 2.7342a_{1}^{(2)} +
    0.011005a_{2}^{(1)} +0.030017a_{2}^{(2)} + 54.386)
        \label{eqn15ap}
    \end{split}
\end{equation}
    
 \begin{equation}
\begin{split}
    (4.2609 a_{2}^{(1)} + 0.011005a_{1}^{(2)} + 2.7342a_{2}^{(2)} + 0.030017a_{3}^{(2)} - 0.17776 a_{1}^{(1)}a_{2}^{(1)} + 0.0356a_{1}^{(1)}a_{2}^{(2)} +\\
    0.0356a_{2}^{(1)}a_{1}^{(2)} + 0.071662 a_{1}^{(2)}a_{2}^{(2)})=
    (3a_{3}^{(1)}(-5.7148) - 2a_{2}^{(1)}(0.8705{a_{1}^{(1)}}^2 \\+ 0.43711a_{1}^{(1)}a_{1}^{(2)} + 2.1719a_{1}^{(1)} + 0.047872{a_{1}^{(2)}}^2 +
    3.9445a_{1}^{(2)}+ 69.473)-a_{1}^{(1)}(2.1719a_{2}^{(1)}\\ +
    3.9445a_{2}^{(2)}+ 1.741a_{1}^{(1)}a_{2}^{(1)} + 0.43711a_{1}^{(1)}a_{2}^{(2)} + 0.43711a_{2}^{(1)}a_{1}^{(2)} + 0.095744a_{1}^{(2)}a_{2}^{(2)}))
        \label{eq16ap}
     \end{split}
\end{equation}
    
      \begin{equation}
    - 5.7148a_{1}^{(2)}=(0.011005a_{1}^{(2)} - 0.030017a_{1}^{(1)}) 
    \label{eqn17ap}
    \end{equation}
    
\begin{equation}
\begin{split}
    (0.28781{a_{1}^{(1)}}^2 + 0.54843a_{1}^{(1)}a_{1}^{(2)} + 23.286a_{1}^{(1)} + 0.22083{a_{1}^{(2)}}^2 + 17.097a_{1}^{(2)}- 0.030017a_{2}^{(1)}\\ + 0.011005a_{2}^{(2)} + 332.49)=
    (2a_{2}^{(2)}(- 5.7148) - a_{1}^{(2)}(0.8705{a_{1}^{(1)}}^2 +\\ 0.43711a_{1}^{(1)}a_{1}^{(2)} + 2.1719a_{1}^{(1)} + 0.047872{a_{1}^{(2)}}^2 + 3.9445a_{1}^{(2)} + 69.473))
    \label{eqn18ap}
     \end{split}
\end{equation}
    
   \begin{equation}
\begin{split}
    (3a_{3}^{(2)}(- 5.7148) - 2a_{2}^{(2)}(0.8705{a_{1}^{(1)}}^2 + 0.43711a_{1}^{(1)}a_{1}^{(2)} + 2.1719a_{1}^{(1)} + 0.047872{a_{1}^{(2)}}^2 \\+ 3.9445a_{1}^{(2)} + 69.473)-a_{1}^{(2)}(2.1719a_{2}^{(1)}+3.9445a_{2}^{(2)} + 1.741a_{1}^{(1)}a_{2}^{(1)} + 0.43711a_{1}^{(1)}a_{2}^{(2)}\\ + 0.43711a_{2}^{(1)}a_{1}^{(2)} + 0.095744a_{1}^{(2)}a_{2}^{(2)}))=\\
    (23.286a_{2}^{(1)} - 0.030017a_{3}^{(1)} + 17.097a_{2}^{(2)} + 0.011005a_{3}^{(2)} + 0.57562a_{1}^{(1)}a_{2}^{(1)} + 0.54843a_{1}^{(1)}a_{2}^{(2)} + \\
    0.54843a_{2}^{(1)}a_{1}^{(2)} + 0.44166 a_{1}^{(2)}a_{2}^{(2)})
    \label{eqn19ap}
   \end{split}
\end{equation}

The above system of nonlinear algebraic equations is solved using Newton-Raphson.

\subsection{Parametrization of the Unstable Manifold of the Mean Field Model of CO oxidation on catalytic surfaces}

We parametrized the unstable manifold of the mean field model (\ref{eqn80}) around $(\theta_{A}^*,\theta_{B}^*,\theta_{C}^*)$ using the following series expansion:
 
 \begin{equation}
 \begin{aligned}
h_u(\boldsymbol{z}_u)\approx
    a_{1,0}z_{u1}+a_{2,0}z_{u1}^2+a_{0,1}z_{u2}+a_{0,2}z_{u2}^2+a_{1,1}z_{u1}z_{u2}+a_{1,2}z_{u1}z_{u2}^2+a_{2,1}z_{u1}^2z_{u2}
    \label{eqn20ap}
\end{aligned}
\end{equation}

Introducing Eq.(\ref{eqn20ap}) into Eq.(\ref{eqn13ap}) and equating the terms up to second order of both sides we get the following set of seven nonlinear equations:

\begin{equation}
\begin{aligned}
0.011005a_{1,0}- 0.030017a_{0,1}=- 5.7148a_{1,0}
\end{aligned}
\label{eqn21ap}
\end{equation}

\begin{equation}
\begin{aligned}
0.030017a_{1,0} + 0.011005a_{0,1}=- 5.7148a_{0,1}
\end{aligned}
\label{eqn22ap}
\end{equation}

\begin{equation}
\begin{aligned}
(a_{0,1}*(332.49a_{1,0}^2 + 23.286a_{1,0} + 0.28781)+ a_{1,0}(54.386a_{1,0}^2 + 4.2609a_{1,0}-0.088879) \\+ 0.02201a_{2,0} - 0.030017a_{1,1})=(- 69.473a_{1,0}^2 - 2.1719a_{1,0} - 5.7148a_{2,0} - 0.8705)
\end{aligned}
\label{eqn23ap}
\end{equation}

\begin{equation}
\begin{aligned}
(a_{1,0}(54.386a_{0,1}^2 + 2.7342a_{0,1}+ 0.035831) + a_{0,1}(332.49a_{0,1}^2 + 17.097a_{0,1}+ 0.22083) \\+ 0.02201a_{0,2}+ 0.030017a_{1,1})=(- 69.473a_{0,1}^2 - 3.9445a_{0,1} - 5.7148a_{0,2} - 0.047872)
\end{aligned}
\label{eqn24ap}
\end{equation}

\begin{equation}
\begin{aligned}
(a_{0,1}(23.286a_{0,1} + 17.097a_{1,0}+ 664.98a_{0,1}a_{1,0} + 0.54843)+ a_{1,0}(4.2609a_{0,1} + 2.7342a_{1,0}+\\ 108.77a_{0,1}a_{1,0} + 0.0356) + 0.02201a_{1,1} + 0.060034a_{2,0} - 0.060034a_{0,2})=\\(- 2.1719a_{0,1} - 3.9445a_{1,0} - 5.7148a_{1,1} - 138.95a_{0,1}a_{1,0} - 0.43711)
\end{aligned}
\label{eqn25ap}
\end{equation}

\begin{equation}
\begin{aligned}
(2a_{0,2}(23.286a_{0,1} + 17.097a_{1,0}+ 664.98a_{0,1}a_{1,0} + 0.54843) + 2a_{2,0}(54.386a_{0,1}^2+ \\
2.7342a_{0,1}+ 0.035831) + a_{1,1}(4.2609a_{0,1} + 2.7342a_{1,0}+ 108.77a_{0,1}a_{1,0} + 0.0356) +\\
a_{1,1}(332.49a_{0,1}^2 + 17.097a_{0,1}+ 0.22083) + a_{0,1}(23.286a_{0,2} + 17.097a_{1,1} + 664.98a_{0,1}a_{1,1} +\\
664.98a_{0,2}a_{1,0})+ a_{1,0}(4.2609a_{0,2} + 2.7342a_{1,1}+ 108.77a_{0,1}a_{1,1} + 108.77a_{0,2}a_{1,0}) + 0.011005a_{1,2} +\\ 0.02201*a_{1,2} + 0.060034a_{2,1})=\\(- 2.1719a_{0,2} - 3.9445a_{1,1} - 5.7148a_{1,2} - 138.95a_{0,1}a_{1,1} - 138.95a_{0,2}*a_{1,0})
\end{aligned}
\label{eqn26ap}
\end{equation}

\begin{equation}
\begin{aligned}
(2a_{0,2}(332.49a_{1,0}^2 + 23.286a_{1,0} - 1.8879e-11*a_{2,0} + 0.28781) + a_{1,1}(23.286a_{0,1} + 17.097a_{1,0}+\\ 664.98a_{0,1}a_{1,0} + 0.54843) + a_{1,1}(54.386a_{1,0}^2 + 4.2609a_{1,0}- 0.088879) + 2a_{2,0}(4.2609a_{0,1} +\\ 2.7342a_{1,0} + 108.77a_{0,1}a_{1,0} + 0.0356) + a_{0,1}(23.286a_{1,1} + 17.097a_{2,0}+ 664.98a_{0,1}a_{2,0} + \\664.98a_{1,0}a_{1,1}) + a_{1,0}(4.2609*a_{1,1} + 2.7342a_{2,0}+ 108.77a_{0,1}a_{2,0} + 108.77a_{1,0}a_{1,1}) +\\ 0.02201a_{2,1} + 0.011005a_{2,1} - 0.060034a_{1,2})=\\(-2.1719a_{1,1} - 3.9445a_{2,0}-5.7148a_{2,1} -138.95a_{0,1}a_{2,0} - 138.95a_{1,0}a_{1,1})
\end{aligned}
\label{eqn27ap}
\end{equation}

The above system of nonlinear algebraic equations is solved with Newton-Raphson.
\clearpage

\bibliographystyle{model1-num-names}
\bibliography{sample.bib}







\end{document}